\def\rr{{\mathbb R}}
\def\rn{{\mathbb{R}^n}}
\def\zz{{\mathbb Z}}
\def\nn{{\mathbb N}}
\def\fz{\infty }
\def\az{\alpha}
\def\lz{\lambda}
\def\lf{\left}
\def\r{\right}
\def\hs{\hspace{0.25cm}}
\def\loc{{\mathop\mathrm{\,loc\,}}}
\def\supp{\mathop\mathrm{\,supp\,}}
\def\XXint#1#2#3{{\setbox0=\hbox{$#1{#2#3}{\int}$ }
\vcenter{\hbox{$#2#3$ }}\kern-.6\wd0}}
\def\B{{B}_{p,q}^\az}
\def\({\left(}
\def \){ \right)}
\def\lz{{\lambda}}
\newtheorem{theorem}{Theorem}[section]
\newtheorem{corollary}{Corollary}[section]
\newtheorem{lemma}{Lemma}[section]
\newtheorem{definition}{Definition}[section]
\newtheorem{proposition}{Proposition}[section]
\newtheorem{remark}{Remark}[section]
\renewcommand{\appendix}{\par
   \setcounter{section}{0}%
   \setcounter{subsection}{0}%
   \csname appendixmore\endcsname
 }
\numberwithin{equation}{section}
\title{\bf\Large Mixed-norm Herz-slice Spaces and Their Applications}
\author{Lihua Zhang, Jiang Zhou}
\begin{document}

\date{}
\maketitle
\vspace{-0.7cm}

\begin{center}
\begin{minipage}{13.5cm}
 {\bf Abstract}\quad 
We introduce mixed-norm Herz-slice spaces unifying classical Herz spaces and mixed-norm slice spaces, establish dual spaces and the block decomposition, and prove that the boundedness of Hardy–Littlewood maximal operator on mixed-norm Herz-slice spaces.
\end{minipage}
\end{center}

\begin{center}
\begin{minipage}{13.5cm}
{\bf Keywords} Mixed-norm space $\cdot$ Herz space $\cdot$ Slice space $\cdot$ Dual $\cdot$ The Hardy-Littlewood maximal operator
\end{minipage}
\end{center}

\begin{center}
\begin{minipage}{13.5cm}
{\bf Mathematics Subject Classification} {\bf (2000)} 42B35 $\cdot$ 35R35 $\cdot$ 46E30 $\cdot$ 42B25
\end{minipage}
\end{center}

\footnotetext[1]{\begin{flushleft}
Lihua Zhang,\\
College of Mathematics and System Science,  Xinjiang University, 
No. 777, Huarui Street, Shuimogou District, Urumqi City, Xinjiang Uygur Autonomous Region, China\\ 
e-mail: hanyehua666@163.com
\end{flushleft}}
\footnotetext[2]{\begin{flushleft}
Jiang Zhou, \\
College of Mathematics and System Science,  Xinjiang University, 
No. 777, Huarui Street, Shuimogou District, Urumqi City, Xinjiang Uygur Autonomous Region, China \\
e-mail: zhoujiang@xju.edu.cn            
          \end{flushleft}}

\section{Introduction\label{s1}}
In 1964, Beurling \cite{BA} originally introduced the Herz space $A_{u}(\rn)$, which is the original version of the Herz space of non-homogeneous type. In 1968, the Herz space $K_{u}(\rn)$ has been studied systematically by Herz \cite{Herz}. In the 1990's, the homogeneous Herz space $(\dot K_u^{\beta,s})(\rn)$ and the non-homogeneous Herz space $(K_q^{\alpha,p})(\rn)$ are introduced by Lu and Yang \cite{LY}. In recent years, the homogeneous Herz space $(\dot K_u^{\beta,s})(\rn)$ and the non-homogeneous Herz space $( K_u^{\beta,s})(\rn)$ was investigated in harmonic analysis, see, \cite{CN,NZ,ST}. For more research about Herz spaces in PDE and harmonic analysis, we refer to \cite{AS,HK,MWY} and so on.

In 1961, the mixed-norm Lebesgue space $L^{\vec{u}}\left(\mathbb{R}^n\right)$ with $\vec{u}=(u_{1},\ \ldots, u_{n})\in(0,\infty]^{n}$
was researched by Benedek and Panzone \cite{BP}, which can go back to \cite{H} in 1960. Since function spaces with mixed norms have a broader purpose on PDE \cite{AI,KN}, more people renewed interest. In recent years, to gain the convergence of the Fourier transform,
in 2021, Huang, Weisz, Yang, and Yuan \cite{HFDY} introduced the mixed-Herz space $\dot{E}_{\vec{u}'}^{\ast}(\rn)$.
In 2021, Wei \cite{WM} established the boundedness of the Hardy--Littlewood maximal operators on mixed-Herz space $(\dot K_{\vec{u}}^{\beta,s})(\rn)$.
In 2022, Zhang and Zhou \cite{ZZM} introduced the mixed-norm amalgam space $(L^{\vec{u}},L^{\vec{v}})(\mathbb{R}^{n})$, and the boundedness of the Hardy--Littlewood maximal operators on the mixed-norm amalgam space $(L^{\vec{u}},L^{\vec{s}})(\mathbb{R}^{n})$ was studied, we can see \cite{LZWO}.
For more meticulous research about mixed-norm spaces, we refer to \cite{AG,LV,ZYZ} and so on.
 
Very recently, Auscher and Mourgoglou \cite{AM} introduced the slice space $\left(E_t^u\right)\left(\mathbb{R}^n\right)$. In 2017, Auscher and Prisuelos-Arribas \cite{AP} studied many classical operators of harmonic analysis on slice space $\left(E_v^u\right)_t\left(\mathbb{R}^n\right)$. In 2022, 
Lu, Zhou, and Wang \cite{LZW} introduced the Herz-slice spaces. Based on these results, we introduce mixed-norm Herz-slice space $(\dot KE_{\vec{u},\vec{v}}^{\beta,s})_t(\rn)$.

Now, we elaborate on the context of this paper.
In sect 2, we introduce the homogeneous mixed-norm Herz-slice space $(\dot KE_{\vec{u},\vec{v}}^{\beta,s})_t(\rn)$ and the non-homogeneous mixed-norm Herz-slice space $(KE_{\vec{u},\vec{v}}^{\beta,s})_t(\rn)$. In sect 3, we establish the dual spaces and study some properties on these spaces. The block decomposition is obtained on mixed-norm Herz-slice spaces in sect 4. In sect 5, we estimate the boundedness of Hardy–Littlewood maximal operator on mixed-norm Herz-slice spaces.

The symbol $\mathscr{K}\left(\mathbb{R}^n\right)$ refers to the set of all measurable functions on $\mathbb{R}^n$. 
We use $\mathbf1_G$ to denote the characteristic function and $|G|$ is the Lebesgue measure of a measurable set $G$. 
Denote $B(y, \lambda)$ the open ball with centered at $y$ with the radius $\lambda$. 
Let $B_m=B(0,2^m)=\{x\in\rn:\,|x|\leq2^m\}$. Denote $S_m:=B_m\setminus B_{m-1}$ for any $m\in\zz$, $\mathbf 1_{m}=\mathbf 1_{S_m}$ for $m\in\zz$, and $\mathbf 1_{S_0}=\mathbf 1_{B_0}$, where $\mathbf 1_{m}$ is the characteristic function of $S_m$. The letters $\vec{u}, \vec{v}, \ldots$ will denote $n$-tuples of the numbers in $[0, \infty], \vec{u}=\left(u_1, \ldots, u_n\right), \vec{v}=$ $\left(v_1, \ldots, v_n\right)$, $n\in \nn$. $0<\vec{u}<\infty$ means that $0<u_i<$ $\infty$ for each $i=1, \cdots, n$. Furthermore, for $\vec{u}=\left(u_1, \ldots, u_n\right)$ and $\eta \in \mathbb{R}$, let
$$
\frac{1}{\vec{u}}=\left(\frac{1}{u_1}, \ldots, \frac{1}{u_n}\right), \quad \frac{\vec{u}}{\eta}=\left(\frac{u_1}{\eta}, \ldots, \frac{u_n}{\eta}\right), \quad \overrightarrow{u^{\prime}}=\left(u_1^{\prime}, \ldots, u_n^{\prime}\right).
$$
Where $u_{i}^{\prime}=\frac{u_{i}}{u_{i}-1}$ is a conjugate exponent of $u_{i}$, $i=1,...,n$. 
For different positive constants we use $C$ to denote them. 
We write $\phi \lesssim \psi$, $\phi \leq C \psi$ mean that  for some constant $C>0$, and $\phi \sim \psi$ means that $\phi \lesssim \psi$ and $\psi \lesssim \phi$.
\section{Main definitions}\label{m}
Let us start by recalling some basic essential notions. 
\begin{definition}({\cite{LY}})
Let $\beta\in\rr$ and $s,\,u\in(0,\infty]$. The homogeneous Herz space
$(\dot K_u^{\beta,s})(\rn)$ is defined by
\begin{equation}\label{DefH}
(\dot K_u^{\beta,s})(\rn):=\lf\{f\in L_{\mathrm{\loc}}^u(\rn\setminus\{0\}):\|f\|_{(\dot K_u^{\beta,s})(\rn)}<\infty\r\},
\end{equation}
where
$$
\|f\|_{(\dot K_u^{\beta,s})(\rn)}:=\lf[\sum_{k=-\infty}^\infty2^{k\beta s}\lf\|f\mathbf1_{S_k}\r\|_{L^u(\rn)}^s\r]^\frac1s,
$$
with the usual modification when $s=\infty$ or $u=\infty$.
And the non-homogeneous Herz space
$(K_u^{\beta,s})(\rn)$ is defined by
\begin{equation}\label{DefnH}
(K_u^{\beta,s})(\rn):=\lf\{f\in L_{\mathrm{loc}}^u(\rn):\|f\|_{(K_u^{\beta,s})(\rn)}<\infty\r\},
\end{equation}
where
$$
\|f\|_{(K_u^{\beta,s})(\rn)}:=\lf[\sum_{k=0}^\infty2^{k\beta s}\lf\|f\mathbf1_{S_k}\r\|_{L^u(\rn)}^s\r]^\frac1s,
$$
with the usual modification when $s=\infty$ or $u=\infty$.
\end{definition}

\begin{definition}({\cite{AP}})
Let $u, v, t \in(0, \infty)$. The slice space $(E_v^u)_t(\rn)$ is defined by
$$
\left(E_v^u\right)_t(\rn):=\left\{f \in L_{\mathrm{\loc}}^1\left(\mathbb{R}^n\right):
\|f\|_{(E_v^u)_t(\rn)}<\infty\right\},
$$
where
$$
\|f\|_{(E_v^u)_t(\rn)}:=\lf\|\lf[\frac1{|B(\cdot,t)|}\int_{B(\cdot,t)}|f(y)|^v\,dy\r]^\frac 1v\r\|_{L^u(\rn)},
$$
with the usual modification when $u=\infty$.
\end{definition}

\begin{definition}({\cite{BP}})
Let $\vec{u}\in(0,\infty]^{n}$. The mixed-norm Lebesgue space $L^{\vec{u}}(\mathbb R^n)$
is defined to be set of all measurable functions $f\in\mathscr{K}(\mathbb R^n)$ such that
$$
\|f\|_{L^{\vec{u}}(\mathbb R^n)}:=\left\{\int_{R}\cdots\left[ \int_{R}|f(x_{1},\ldots,x_{n})|^{u_{1}}dx_{1}\right]^{u_{2}/u_{1}}\cdots dx_{n}\right\}^{1/u_{n}}<\infty.
$$
Obviously, if $u_1=\cdots=u_n=u$, we simplify $L^{\vec{u}}(\mathbb{R}^n)$ to classical Lebesgue spaces $L^u$ and
$$\left\|f\right\|_{L^{u}(\mathbb{R}^n)}=\left[\int_{\mathbb{R}^n}\left|f(x)\right|^{u} dx\right]^{\frac{1}{u}}.$$
When $u_{i}=\infty, i=1,\cdots,n,$ then we make the appropriate modifications.
\end{definition}

\begin{definition}(\cite{ZZM})
Let $t\in(0,\infty)$ and $\vec{v}, \vec{u}\in(1,\fz)^{n}$. The mixed amalgam spaces $(E^{\vec{u}}_{\vec{v}})_t(\mathbb{R}^n)$ is defined as the set of all measurable functions $f$ satisfy $f\in L^{1}_{\mathrm{\loc}}(\mathbb{R}^n)$,
$$(E^{\vec{u}}_{\vec{v}})_t(\mathbb{R}^n) :=\left\{f:\|f\|_{(E^{\vec{u}}_{\vec{v}})_t(\mathbb{R}^n)}
=\left\|\frac{\|f\mathbf1_{B(\cdot,t)}\|_{L^{\vec{v}}(\mathbb{R}^n)}}{\|\mathbf1_{B(\cdot,t)}\|_{L^{\vec{v}}
(\mathbb{R}^n)}}\right\|_{L^{\vec{u}}(\rn)}
<\infty\right\},$$
with the usual modification for $u_i=\infty, i=1,\cdots,n.$
\end{definition}

Now, we give the homogeneous mixed-norm Herz-slice space $(\dot KE_{\vec{u},\vec{v}}^{\beta,s})_t(\rn)$ and the non-homogeneous mixed-norm Herz-slice space $(KE_{\vec{u},\vec{v}}^{\beta,s})_t(\rn)$.
\begin{definition}\label{Def}
Let $\beta\in\rr$, $t\in(0,\infty)$, $s\in(0,\infty]$ and $\vec{v}, \vec{u}\in(1,\fz)^{n}$.

(1)The homogeneous mixed-norm Herz-slice space
$(\dot KE_{\vec{u},\vec{v}}^{\beta,s})_t(\rn)$ is defined by
\begin{equation}\label{Hs}
(\dot KE_{\vec{u},\vec{v}}^{\beta,s})_t(\rn):=\left\{f\in L_{\mathrm{\loc}}^1(\rn) :\lf\|f\r\|_{(\dot KE_{\vec{u},\vec{v}}^{\beta,s})_t(\rn)}<\infty\right\},
\end{equation}
where
$$
\lf\|f\r\|_{(\dot KE_{\vec{u},\vec{v}}^{\beta,s})_t(\rn)}:=\lf[\sum_{k=-\infty}^\infty2^{k\beta s}\lf\|f\mathbf1_{S_k}\r\|_{(E_{\vec{v}}^{\vec{u}})_t(\rn)}^s\r]^\frac1s,
$$
with the usual modification made when $s=\infty$.

(2)The non-homogeneous mixed-norm Herz-slice space $(KE_{\vec{u},\vec{v}}^{\beta,s})_t(\rn)$ is defined by
\begin{equation}
(KE_{\vec{u},\vec{v}}^{\beta,s})_t(\rn):=\left\{f\in L_{\mathrm{\loc}}^1(\rn):\lf\|f\r\|_{( KE_{\vec{u},\vec{v}}^{\beta,s})_t(\rn)}<\infty\right\},
\end{equation}
where
$$
\lf\|f\r\|_{( KE_{\vec{u},\vec{v}}^{\beta,s})_t(\rn)}:=\lf[\sum_{k=0}^\infty2^{k\beta s}\lf\|f\mathbf1_{S_k}\r\|_{(E_{\vec{v}}^{\vec{u}})_t(\rn)}^s\r]^\frac1s,
$$
with the usual modification made when $s=\infty$.
\end{definition}

\begin{remark}
Let $\beta\in\rr$, $t\in(0,\infty)$, $s\in(0,\infty]$ and $\vec{v}, \vec{u}\in(1,\fz)^{n}$.
\begin{itemize}
\item[(1)] $\forall$ $0<s<\infty$, we have
\begin{equation}
\left\||f|^r\right\|_{(\dot KE_{\vec{u}, \vec{v}}^{\beta,s})_t(\rn)}^{1/r}=\|f\|_{(\dot KE_{r{\vec{u}},r{\vec{v}}}^{\beta/r,rs})_t(\rn)}.
\end{equation}
\item[(2)] $(\dot KE_{{\vec{u}}, {\vec{v}}}^{0,s})_t(\rn)=(E_{\vec{v}}^{\vec{u}})_t(\rn)$, when $u_{1}=\cdots=u_{n}, v_{1}=\cdots=v_{n}$,  $(E_{\vec{v}}^{\vec{u}})_t(\rn)=(E^{u}_{v})_t(\rn).$
\item[(3)] Let $\beta\in\rr$, if ${\vec{v}}={\vec{u}}$, $(\dot KE_{\vec{u},\vec{v}}^{\beta,s})_t(\rn)=(\dot K_{\vec{u}}^{\beta,s})(\rn)$ and
$(KE_{\vec{u}, \vec{v}}^{\beta,s})_t(\rn)=(K_{\vec{u}}^{\beta,s})(\rn)$;
  obviously, when $u_{1}=\cdots=u_{n}$, $\dot{K}_{\vec{u}}^{\beta, s}\left(\mathbb{R}^{n}\right)=\dot{K}_{u}^{\beta, s}\left(\mathbb{R}^{n}\right)$,
  ${K}_{\vec{u}}^{\beta, s}\left(\mathbb{R}^{n}\right)={K}_{u}^{\beta, s}\left(\mathbb{R}^{n}\right)$.
\end{itemize}
\end{remark}
In what follows, we give some preliminaries on ball quasi-Banach function spaces introduced in \cite{SHYY}. For any $x \in \mathbb{R}^n$ and $\lambda \in(0, \infty)$, let $B(x, \lambda):=\left\{y \in \mathbb{R}^n:|x-y|<\lambda\right\}$ and
\begin{equation}\label{x}
\mathbb{B}:=\left\{B(x, \lambda): x \in \mathbb{R}^n \:\text{ and } \: \lambda\in(0, \infty)\right\}.
\end{equation}
\begin{definition}\label{defl}
 A quasi-Banach space $X \subset \mathscr{K}\left(\mathbb{R}^n\right)$ is called a ball quasi-Banach function space if it satisfies

(1) $\|\phi\|_X=0$ implies that $\phi=0$ almost everywhere;

(2) $\psi| \leq|\phi|$ almost everywhere implies that $\|\psi\|_X \leq\|\phi\|_X$;

(3) $0 \leq \phi_m \uparrow \phi$ almost everywhere implies that $\left\|\phi_m\right\|_X \uparrow\|\phi\|_X$;

(4) $B \in \mathbb{B}$ implies that $\mathbf{1}_B \in X$, where $\mathbb{B}$ is as in (\ref{x}).

Moreover, a ball quasi-Banach function space $X$ is called a ball Banach function space if the norm of $X$ satisfies the triangle inequality:

(5) for any $\phi, \psi \in X$,
\begin{equation}
\|\phi+\psi\|_X \leq\|\phi\|_X+\|\psi\|_X,
\end{equation}
and, for any $B \in \mathbb{B}$, there exists a positive constant $C_{(B)}$, depending on $B$, such that,

(6) for any $\phi \in X$
\begin{equation}
\int_B|\phi(x)| d x \leq C_{(B)}\|\phi\|_X .
\end{equation}
\end{definition}

Let us recall the notion of the Hardy-Littlewood maximal operator $\mathscr{M}$.
\begin{definition}
For any $\phi \in$ $L_{\mathrm{loc} }^1\left(\mathbb{R}^n\right)$ and $x \in \mathbb{R}^n$, we define the Hardy-Littlewood maximal function  $\mathscr{M}(\phi)$ by
\begin{equation}
\mathscr{M}(\phi)(x):=\sup _B \frac{1}{|B|} \int_B|\phi(y)| d y,
\end{equation}
where the supremum is taken over all balls $B \in \mathbb{B}$ containing $x$.
\end{definition}

\section{Properties of mixed-norm Herz-slice spaces}\label{H}
In this section, we first present the dual spaces of mixed-norm Herz-slice spaces, then show some elementary properties on mixed-norm Herz-slice spaces. Before statement the dual spaces of mixed-norm Herz-slice spaces, we first recall H\"older's inequality on $(E^{\vec{u}}_{\vec{v}})_t(\rn)$.  
\begin{lemma}\label{l}(\cite{ZZM}) 
Let $t\in(0,\infty)$ and $\vec{v}, \vec{u}\in(1,\fz)^{n}$. If $\phi\in(E^{\vec{u}}_{\vec{v}})_t(\rn)$ and $\psi\in(E^{{\vec{u'}}}_{{\vec{v'}}})_t(\rn)$, then $\phi\psi$ is integrable and
$$
\|\phi\psi\|_{L^1(\mathbb R^n)}\leq\|\phi\|_{(E^{\vec{u}}_{\vec{v}})_t(\rn)}\|\psi\|_{(E^{{\vec{u\prime}}}_{{\vec{v\prime}}})_t(\rn)},
$$
where $1/\vec{u}+1/\vec{u'}=1/\vec{v}+1/\vec{v'}=1$.
\end{lemma}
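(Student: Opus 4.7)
The plan is a two-step Fubini-and-H\"older argument: apply the classical mixed-norm H\"older inequality once in the inner ($x$) variable of the iterated norm defining $(E^{\vec u}_{\vec v})_t(\rn)$, then once more in the outer ($y$) variable.

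First, using the identity $x\in B(y,t)\iff y\in B(x,t)$ together with the translation invariance $|B(x,t)|=|B(0,t)|$, Fubini gives
\begin{equation*}
\|\phi\psi\|_{L^1(\rn)}=\frac{1}{|B(0,t)|}\int_{\rn}\int_{B(y,t)}|\phi(x)\psi(x)|\,dx\,dy.
\end{equation*}
For each fixed $y$, iterated coordinate-wise H\"older on $L^{\vec v}(\rn)$, using the hypothesis $1/\vec v+1/\vec{v'}=1$, yields
\begin{equation*}
\int_{B(y,t)}|\phi\psi|\,dx\leq \|\phi\mathbf{1}_{B(y,t)}\|_{L^{\vec v}(\rn)}\,\|\psi\mathbf{1}_{B(y,t)}\|_{L^{\vec{v'}}(\rn)}.
\end{equation*}

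Second, integrating this estimate in $y$ and applying the mixed-norm H\"older inequality with the conjugate pair $(\vec u,\vec{u'})$ in the $y$ variable gives
\begin{equation*}
\int_{\rn}\|\phi\mathbf{1}_{B(y,t)}\|_{L^{\vec v}(\rn)}\|\psi\mathbf{1}_{B(y,t)}\|_{L^{\vec{v'}}(\rn)}\,dy\leq \Bigl\|\|\phi\mathbf{1}_{B(\cdot,t)}\|_{L^{\vec v}(\rn)}\Bigr\|_{L^{\vec u}(\rn)}\,\Bigl\|\|\psi\mathbf{1}_{B(\cdot,t)}\|_{L^{\vec{v'}}(\rn)}\Bigr\|_{L^{\vec{u'}}(\rn)}.
\end{equation*}
Since $\|\mathbf{1}_{B(\cdot,t)}\|_{L^{\vec v}(\rn)}$ and $\|\mathbf{1}_{B(\cdot,t)}\|_{L^{\vec{v'}}(\rn)}$ are both independent of the center by translation invariance, I pull them out of the outer norms as scalars; divided back into the outer norms they reconstruct the mixed amalgam norms of $\phi$ and $\psi$. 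Combining this with the factor $1/|B(0,t)|$ from the first step delivers
\begin{equation*}
\|\phi\psi\|_{L^1(\rn)}\leq \frac{\|\mathbf{1}_{B(0,t)}\|_{L^{\vec v}(\rn)}\|\mathbf{1}_{B(0,t)}\|_{L^{\vec{v'}}(\rn)}}{|B(0,t)|}\,\|\phi\|_{(E^{\vec u}_{\vec v})_t(\rn)}\,\|\psi\|_{(E^{\vec{u'}}_{\vec{v'}})_t(\rn)},
\end{equation*}
which in particular shows that $\phi\psi\in L^1(\rn)$.

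The main obstacle is to match the geometric prefactor with the clean constant $1$ in the claim. When $B(\cdot,t)$ is read as a coordinate cube, the ratio equals $1$ exactly, by the factorization of Lebesgue measure and the identity $\sum_{i=1}^n(1/v_i+1/v_i')=n$; for Euclidean balls, iterated H\"older only provides $|B(0,t)|\leq \|\mathbf{1}_{B(0,t)}\|_{L^{\vec v}(\rn)}\|\mathbf{1}_{B(0,t)}\|_{L^{\vec{v'}}(\rn)}$, so one either reads the conclusion as ``$\lesssim$'' or invokes the normalization convention of \cite{ZZM} to absorb the resulting dimensional constant. Once the prefactor is replaced by $1$, chaining the two H\"older estimates yields exactly the announced bound.
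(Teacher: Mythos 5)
The paper does not actually prove Lemma \ref{l}; it is imported from \cite{ZZM} without proof, so there is no internal argument to compare yours against. Your two-step Fubini--H\"older scheme is the standard route to such amalgam-space H\"older inequalities and is essentially correct: the identity $\|\phi\psi\|_{L^1(\rn)}=|B(0,t)|^{-1}\int_{\rn}\int_{B(y,t)}|\phi\psi|\,dx\,dy$ is right, the inner and outer applications of the Benedek--Panzone mixed-norm H\"older inequality are legitimate, and the translation invariance of $y\mapsto\|\mathbf{1}_{B(y,t)}\|_{L^{\vec v}(\rn)}$ lets you reassemble the two normalized amalgam norms. The only caveat is the one you already flag: the resulting prefactor $\|\mathbf{1}_{B(0,t)}\|_{L^{\vec v}(\rn)}\|\mathbf{1}_{B(0,t)}\|_{L^{\vec{v'}}(\rn)}/|B(0,t)|$ is $\geq 1$ by H\"older, with equality for coordinate cubes or when $v_1=\cdots=v_n$, but strictly greater than $1$ for Euclidean balls with genuinely mixed $\vec v$ (the equality case of the iterated H\"older inequality forces the slice lengths to be constant). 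So as written your argument delivers the inequality with a dimensional constant $C(n,\vec v)$ independent of $t$, i.e. ``$\lesssim$'' rather than the literal constant $1$ in the display; this is harmless for every application of the lemma in the paper, which only ever uses it up to constants, but to get constant exactly $1$ one must either work with cubes (as is common in the amalgam literature) or defer to the precise normalization adopted in \cite{ZZM}.
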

We began to prove the dual space of $(\dot KE_{\vec{u}, \vec{v}}^{\beta,s})_t(\rn)$.
\begin{theorem}\label{KDuiO}
Let $\beta\in\rr$, $t,s\in(0,\infty)$ and $\vec{v}, \vec{u}\in(1,\fz)^{n}$. The dual space of
$(\dot KE_{\vec{u},\vec{v}}^{\beta,s})_t(\rn)$ is
$$
\left(\left(\dot KE_{\vec{u},\vec{v}}^{\beta,s}\right)_t(\rn)\right)^*=\begin{cases}
\left(\dot KE_{{\vec{u\prime}},{\vec{v\prime}}}^{-\beta,s\prime}\right)_t(\rn),\quad 1<s<\infty,\\
\left(\dot KE_{{\vec{u\prime}},{\vec{v\prime}}}^{-\beta,\infty}\right)_t(\rn),\quad 0<s\leq1.
\end{cases}
$$
\end{theorem}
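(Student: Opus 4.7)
The plan is to establish the duality in the standard two-step manner, handling the two ranges of $s$ in parallel. Set $\sigma:=s'$ when $1<s<\infty$ and $\sigma:=\infty$ when $0<s\leq 1$; I will prove both inclusions $(\dot KE_{\vec{u'},\vec{v'}}^{-\beta,\sigma})_t(\rn) \hookrightarrow \big((\dot KE_{\vec{u},\vec{v}}^{\beta,s})_t(\rn)\big)^*$ and its converse.

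For the easy inclusion, given $g$ in the candidate dual space, define $L_g(f):=\int_{\rn} f(x)g(x)\,dx$. Writing $fg=\sum_{k\in\zz}(f\mathbf1_{S_k})(g\mathbf1_{S_k})$ and applying Lemma \ref{l} on each annulus yields
\begin{equation*}
|L_g(f)|\leq\sum_{k\in\zz}\bigl(2^{k\beta}\|f\mathbf1_{S_k}\|_{(E_{\vec{v}}^{\vec{u}})_t(\rn)}\bigr)\bigl(2^{-k\beta}\|g\mathbf1_{S_k}\|_{(E_{\vec{v'}}^{\vec{u'}})_t(\rn)}\bigr).
\end{equation*}
For $1<s<\infty$, the discrete H\"older inequality in $\ell^s$--$\ell^{s'}$ bounds the right-hand side by $\|f\|_{(\dot KE_{\vec{u},\vec{v}}^{\beta,s})_t(\rn)}\|g\|_{(\dot KE_{\vec{u'},\vec{v'}}^{-\beta,s'})_t(\rn)}$. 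For $0<s\leq 1$, I factor out $\|g\|_{(\dot KE_{\vec{u'},\vec{v'}}^{-\beta,\infty})_t(\rn)}=\sup_k 2^{-k\beta}\|g\mathbf1_{S_k}\|_{(E_{\vec{v'}}^{\vec{u'}})_t(\rn)}$ and use the elementary inequality $\sum a_k\leq (\sum a_k^s)^{1/s}$, valid for $a_k\geq 0$ and $s\leq 1$, to get the corresponding bound.

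For the reverse inclusion, let $L\in\big((\dot KE_{\vec{u},\vec{v}}^{\beta,s})_t(\rn)\big)^*$. For each $k\in\zz$ the map $h\mapsto L(h\mathbf1_{S_k})$ is a bounded linear functional on $(E_{\vec{v}}^{\vec{u}})_t(\rn)$, since $\|h\mathbf1_{S_k}\|_{(\dot KE_{\vec{u},\vec{v}}^{\beta,s})_t(\rn)}=2^{k\beta}\|h\mathbf1_{S_k}\|_{(E_{\vec{v}}^{\vec{u}})_t(\rn)}$ directly from Definition \ref{Def}. Invoking the duality of the mixed-norm amalgam (slice) spaces established in \cite{ZZM}, there is a unique $g_k\in(E_{\vec{v'}}^{\vec{u'}})_t(\rn)$, supported in $S_k$, with $L(h\mathbf1_{S_k})=\int g_k h\,dx$ and $\|g_k\|_{(E_{\vec{v'}}^{\vec{u'}})_t(\rn)}\leq 2^{k\beta}\|L\|$. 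Setting $g:=\sum_{k\in\zz} g_k$, one obtains $L(f)=\int fg\,dx$ on the dense subspace of $f$ with finitely-supported annular decomposition, so the remaining task is to control $\|g\|_{(\dot KE_{\vec{u'},\vec{v'}}^{-\beta,\sigma})_t(\rn)}$ by $\|L\|$.

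This final norm estimate is the main obstacle. For $1<s<\infty$, I would construct a near-extremal test function: using the sharpness of Lemma \ref{l}, pick $h_k\in(E_{\vec{v}}^{\vec{u}})_t(\rn)$ supported in $S_k$ with $\|h_k\|_{(E_{\vec{v}}^{\vec{u}})_t(\rn)}\leq 1$ and $\int g_k h_k\geq(1-\varepsilon)\|g_k\|_{(E_{\vec{v'}}^{\vec{u'}})_t(\rn)}$, and set
\begin{equation*}
f_N:=\sum_{|k|\leq N} 2^{-k\beta s'}\|g_k\|_{(E_{\vec{v'}}^{\vec{u'}})_t(\rn)}^{s'-1}\,h_k.
\end{equation*}
Using the identities $(s'-1)s=s'$ and $s(1-s')=-s'$, a direct computation shows $\|f_N\|_{(\dot KE_{\vec{u},\vec{v}}^{\beta,s})_t(\rn)}\leq A_N^{1/s}$ while $L(f_N)\geq(1-\varepsilon)A_N$, where $A_N:=\sum_{|k|\leq N} 2^{-k\beta s'}\|g_k\|_{(E_{\vec{v'}}^{\vec{u'}})_t(\rn)}^{s'}$. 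Rearranging gives $A_N^{1/s'}\leq\|L\|/(1-\varepsilon)$, and sending $N\to\infty$ and $\varepsilon\downarrow 0$ yields the desired bound. For $0<s\leq 1$, one tests $L$ instead on single-annulus functions $f=h_k$, whose $\dot KE$-norm is $\leq 2^{k\beta}$, to conclude $2^{-k\beta}\|g_k\|_{(E_{\vec{v'}}^{\vec{u'}})_t(\rn)}\leq\|L\|$ for every $k$, producing the $\ell^\infty$ norm on the dual side. The delicate points are (i) extracting slice-space duality in the precise form required from \cite{ZZM} and ensuring $g_k$ is supported in $S_k$, (ii) justifying that $L$ is represented by $g$ on the whole space by density and continuity, especially in the quasi-Banach regime $s\leq 1$, and (iii) verifying the near-extremal selection of $h_k$ uniformly in $k$.
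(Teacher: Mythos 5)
Your proposal is correct and follows essentially the same route as the paper: the forward inclusion via annulus-wise H\"older (Lemma \ref{l}) combined with discrete H\"older (resp.\ the $\ell^s\hookrightarrow\ell^1$ embedding for $s\le 1$), and the converse by restricting $L$ to each annulus $S_k$, identifying the restriction with an element of $(E_{\vec{v'}}^{\vec{u'}})_t(\rn)$ via slice-space duality, and testing against the weighted near-extremal sum $\sum_k 2^{-k\beta s'}\|g_k\|^{s'-1}h_k$ to extract the $\ell^{s'}$ (resp.\ $\ell^\infty$) bound. The ``delicate points'' you flag are exactly the steps the paper also passes over lightly, so there is nothing further to repair.
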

\begin{proof}
Let $1<s<\infty$ and $L\in(\dot KE_{{\vec{u\prime}},{\vec{v\prime}}}^{-\beta,s\prime})_t(\rn)$.
For any $g\in(\dot KE_{\vec{u},\vec{v}}^{\beta,s})_t(\rn)$, we define
$$
(L,g):=\int_\rn L(x)g(x)\,dx=\sum_{l=-\infty}^\infty\int_{S_l}L(x)g(x)\,dx.
$$
By Lemma \ref{l}, we have
\begin{align*}
|(L,g)|
&\leq\left[\sum_{l=-\infty}^\infty2^{-l\beta s'}\|L\|_{(E^{{\vec{u\prime}}}_{{\vec{v'}}})_t(S_l)}^{s\prime}\right]^\frac1{s\prime}
\left[\sum_{l=-\infty}^\infty2^{l\beta s}\|g\|_{(E^{\vec{u}}_{\vec{v}})_t(S_l)}^s\right]^\frac1{s}\\
&=\|L\|_{(\dot KE_{{\vec{u\prime}},{\vec{v\prime}}}^{-\beta,s\prime})_t(\rn)}\|g\|_{(\dot KE_{\vec{u},\vec{v}}^{\beta,s})_t(\rn)}.
\end{align*}
This indicates that $L\in((\dot KE_{\vec{u},\vec{v}}^{\beta,s})_t(\rn))^*$.

Let $L\in ((\dot KE_{\vec{u},\vec{v}}^{\beta,s})_t(\rn))^*$. For any $l\in\zz$ and $g_l\in (E_{\vec{v}}^{\vec{u}})_t(\rn)$, write
$\widetilde{g_l}:= g_l\mathbf1_{S_l}$, then $\widetilde g_l\in (\dot KE_{\vec{u},\vec{v}}^{\beta,s})_t(\rn)$,
$\lf\|g_{l}\r\|_{(\dot KE_{\vec{u},\vec{v}}^{\beta,w})_t(\rn)}=2^{l\alpha}\|\widetilde g_l\|_{(E_{\vec{v}}^{\vec{u}})_t(\rn)}$.

Let $(L_l,g_l):=(L,\widetilde g_l)$. We could easily know that $L_l\in (E_{{\vec{v\prime}}}^{{\vec{u\prime}}})_t(\rn)$ and $$\lf\|L_l\r\|_{(E_{{\vec{v\prime}}}^{{\vec{u\prime}}})_t(\rn)}\leq2^{l\beta}\|L\|_{(\dot KE_{\vec{u},\vec{v}}^{\beta,s})_t(\rn))^*}.$$
Additionally, for any $g\in(\dot KE_{\vec{u},\vec{v}}^{\beta,s})_t(\rn)$, we have $g\mathbf1_{S_l}\in (E_{\vec{v}}^{\vec{u}})_t(\rn)$.
Then, when given $P, Q\in\mathbb N$,
$$
\sum_{l=-P}^Q(L_l,g\mathbf1_{S_l})=\left(L,\sum_{l=-P}^Qg\mathbf1_{S_l}\right).
$$
For any $l\in\zz$, take $g_l'\in(E_{\vec{v}}^{\vec{u}})_t(\rn)$ with $\supp (g_l^{\prime})\subset S_l$ and $\lf\|f_l'\r\|_{(E_{\vec{v}}^{\vec{u}})_t(\rn)}=2^{-l\beta}$ such that
$$
\left(L_l,g_l'\r)\geq 2^{-l\beta}\lf\|L_l\r\|_{(E_{{\vec{v\prime}}}^{{\vec{u\prime}}})_t(\rn)}-\delta_l,
$$
where $\delta_l>0$ is decided subsequently. Let
$$
g_l:=\left(2^{-l\beta}\|L_l\|_{(E_{{\vec{v\prime}}}^{{\vec{u\prime}}})_t(\rn)}\right)^{s\prime-1}g_l'.
$$
For any given $\delta\in(0,\infty)$ , choose $\delta_l>0$ small enough such that
$$
(L_l,g_l)+ 2^{-|l|}\delta\geq2^{-l\beta s'}\|L_l\|_{(E_{{\vec{v\prime}}}^{{\vec{u\prime}}})_t(\rn)}^{s\prime}=2^{l\beta s}\lf\|g_l\r\|_{(E_{\vec{v}}^{\vec{u}})_t(\rn)}^s.
$$
Then we easily get
\begin{align*}
&\sum_{l=-P}^Q2^{-l\beta s\prime}\lf\|L_k\mathbf1_{S_l}\r\|_{(E_{{\vec{v\prime}}}^{{\vec{u\prime}}})_t(\rn)}^{s\prime}\\
&\leq 4\varepsilon+\left(L,\sum_{l=-P}^Qg_l\mathbf1_{S_l}\right)\\
&\leq 4\varepsilon+\|L\|_{((\dot KE_{\vec{u},\vec{v}}^{\beta,s})_t(\rn))^*}\left\|\sum_{l=-P}^Qg_l\right\|_{(\dot KE_{\vec{u},\vec{v}}^{\beta,s})_t(\rn)}\\
&\leq 4\varepsilon+\|L\|_{((\dot KE_{\vec{u},\vec{v}}^{\beta,s})_t(\rn))^*}\lf(\sum_{l=-P}^Q2^{l\beta p}\lf\|g_l\mathbf1_{S_l}\r\|_{(E_{\vec{v}}^{\vec{u}})_t(\rn)}^s\r)^\frac1s\\
&=4\varepsilon+\|L\|_{((\dot KE_{\vec{u},\vec{v}}^{\beta,s})_t(\rn))^*}\lf(\sum_{l=-P}^Q2^{-l\beta s'}\lf\|L_l\mathbf1_{S_l}\r\|_{(E_{{\vec{v\prime}}}^{{\vec{u\prime}}})_t(\rn)}^{s\prime}\r)^\frac1s.
\end{align*}
Using $\delta\rightarrow0$ and $P, Q\rightarrow\infty$, we conclude that
\begin{equation}\label{Eqri}
\left[\sum_{l=-\infty}^\infty2^{-l\beta s'}\lf\|L_l\mathbf1_{S_l}\r\|_{(E_{{\vec{v\prime}}}^{{\vec{u\prime}}})_t(\rn)}^{s\prime}\right]^\frac1{s\prime}\leq \lf\|L\r\|_{((\dot KE_{\vec{u},\vec{v}}^{\beta,s})_t(\rn))^*}.
\end{equation}
Define
$$
\widetilde L(x):=\sum_{l=-\infty}^\infty L_l(x)\mathbf1_{S_l}(x).
$$
From \eqref{Eqri} we know that $\widetilde T\in (\dot KE_{{\vec{u\prime}},{\vec{v\prime}}}^{-\beta,s\prime})_t(\rn)$. Furthermore, for any $g\in(\dot KE_{\vec{u},\vec{v}}^{\beta,s})_t(\rn)$,
we can see that
\begin{align*}
\lf(\widetilde L,g\r)
&=\sum_{l=-\infty}^\infty L_l(x)\mathbf1_{S_l}(x)g(x)dx,\\
&=\sum_{l=-\infty}^\infty\int_{S_l}L_l(x)g(x)\,dx=\sum_{l=-\infty}^\infty(L,g\mathbf1_{S_l})=(L,g).
\end{align*}
Thus, $L$ is also in $(\dot KE_{\vec{u\prime},\vec{v\prime}}^{-\beta,s\prime})_t(\rn)$. So for $1<{s}<\infty$,
we will omit the details since the proof is similar.
\end{proof}
Based on the closed-graph theorem, we immediately receive the following result.
\begin{corollary}\label{NN}
Let $\beta\in\rr$, $t,s\in(0,\infty)$ and $\vec{v}, \vec{u}\in(1,\fz)^{n}$ with $1/s+1/s\prime=1/{\vec{v}}+1/{\vec{v\prime}}=1/{\vec{u}}+1/{\vec{u\prime}}=1$.
Then $\phi\in(\dot KE_{\vec{u},\vec{v}}^{\beta,s})_t(\rn)$ if and only if
$$
\int_\rn \phi(x)\psi(x)\,dx<\fz,
$$
with $\psi\in (\dot KE_{\vec{u\prime},\vec{v\prime}}^{-\beta,s\prime})_t(\rn)$, and
$$
\lf\|\phi\r\|_{(\dot KE_{\vec{u},\vec{v}}^{\beta,s})_t(\rn)}=
\sup \lf\{\int_\rn \phi(x)\psi(x)\,dx:\lf\|\psi\r\|_{(\dot KE_{{\vec{u\prime}},\vec{v\prime}}^{-\beta,s\prime})_t(\rn)}\leq 1\r\}.
$$
\end{corollary}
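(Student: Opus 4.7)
The plan is to deduce this corollary from Theorem~\ref{KDuiO} together with a closed-graph argument; the substance of the statement is that the canonical embedding of $(\dot KE_{\vec{u},\vec{v}}^{\beta,s})_t(\rn)$ into its bidual is isometric. The hypothesis $1/s+1/s\prime=1$ forces $s\prime\in(1,\fz)$, so Theorem~\ref{KDuiO} applies symmetrically to both spaces appearing in the duality.

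For the ``only if'' direction, given $\phi\in(\dot KE_{\vec{u},\vec{v}}^{\beta,s})_t(\rn)$ and $\psi\in(\dot KE_{\vec{u\prime},\vec{v\prime}}^{-\beta,s\prime})_t(\rn)$, I would decompose $\int_\rn\phi\psi$ along the dyadic annuli $S_l$, apply Lemma~\ref{l} on each $S_l$, and close with the discrete H\"older inequality with conjugate pair $(s,s\prime)$ exactly as in the first part of the proof of Theorem~\ref{KDuiO}, obtaining
\[
\Big|\int_\rn\phi(x)\psi(x)\,dx\Big|\leq\|\phi\|_{(\dot KE_{\vec{u},\vec{v}}^{\beta,s})_t(\rn)}\,\|\psi\|_{(\dot KE_{\vec{u\prime},\vec{v\prime}}^{-\beta,s\prime})_t(\rn)}.
\]
In particular the integral is finite, and taking the supremum over $\|\psi\|\leq 1$ dominates the right-hand side of the claimed norm identity by $\|\phi\|_{(\dot KE_{\vec{u},\vec{v}}^{\beta,s})_t(\rn)}$. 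The reverse inequality is obtained through a Hahn--Banach selection: pick a norming functional $\Lambda$ on $(\dot KE_{\vec{u},\vec{v}}^{\beta,s})_t(\rn)$ with $\Lambda(\phi)=\|\phi\|$ and $\|\Lambda\|=1$, then realise $\Lambda$ as integration against some admissible $\psi$ via Theorem~\ref{KDuiO}.

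For the ``if'' direction, suppose $\phi\in\mathscr K(\rn)$ satisfies $T\psi:=\int_\rn\phi\psi<\fz$ for every $\psi\in(\dot KE_{\vec{u\prime},\vec{v\prime}}^{-\beta,s\prime})_t(\rn)$. Once $T$ is shown to be a bounded linear functional, Theorem~\ref{KDuiO} with the exponents relabelled represents it as integration against some $\widetilde\phi\in(\dot KE_{\vec{u},\vec{v}}^{\beta,s})_t(\rn)$, and testing against $\mathbf 1_B$ for $B\in\mathbb B$ then forces $\widetilde\phi=\phi$ almost everywhere, together with the desired norm identity. Boundedness of $T$ is supplied by the closed-graph theorem: if $\psi_m\to\psi$ in the dual norm and $T\psi_m\to c$ in $\cc$, then the convergence $\|(\psi_m-\psi)\mathbf 1_{S_l}\|_{(E^{\vec{u\prime}}_{\vec{v\prime}})_t(\rn)}\to 0$ produces, after a diagonal subsequence extraction over $l\in\zz$, pointwise almost everywhere convergence on each $S_l$, and dominated convergence then identifies $c$ with $T\psi$. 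The principal technical obstacle is exactly this subsequence extraction: one must pass from mixed-norm slice convergence to a.e.\ convergence on annuli while keeping uniform control of the integrands $\phi\psi_m$, exploiting the ball-Banach function-space structure of Definition~\ref{defl} piece by piece on each $S_l$. Once that is in place, the remainder of the argument reduces to the duality already supplied by Theorem~\ref{KDuiO}.
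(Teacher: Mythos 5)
Your route coincides with what the paper intends: the authors give no argument beyond the sentence ``Based on the closed-graph theorem, we immediately receive the following result,'' and the proof they have in mind is exactly the combination you describe --- Lemma \ref{l} plus the discrete H\"older inequality for the bound $\sup\{\int\phi\psi\}\leq\|\phi\|_{(\dot KE_{\vec{u},\vec{v}}^{\beta,s})_t(\rn)}$, the isometric identification of Theorem \ref{KDuiO} (via a Hahn--Banach norming functional, which is legitimate here because $1/s+1/s\prime=1$ forces $s\geq1$, so the quasi-norm is a genuine norm) for the reverse inequality, and the closed-graph theorem for the ``if'' direction.

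The one step you flag but do not close --- uniform control of the integrands $\phi\psi_m$ so that the limit $c$ can be identified with $T\psi$ --- is a genuine gap as written, and it is the only place where real work is needed. The standard repair is global rather than annulus-by-annulus: from $\|\psi_m-\psi\|_{(\dot KE_{{\vec{u\prime}},{\vec{v\prime}}}^{-\beta,s\prime})_t(\rn)}\to0$ extract a subsequence with $\sum_{j}\|\psi_{m_{j+1}}-\psi_{m_j}\|<\fz$ and set $g:=|\psi_{m_1}|+\sum_{j}|\psi_{m_{j+1}}-\psi_{m_j}|$. The partial sums increase to $g$, so Corollary \ref{LLll} shows $g\in(\dot KE_{{\vec{u\prime}},{\vec{v\prime}}}^{-\beta,s\prime})_t(\rn)$; moreover $g$ dominates every $|\psi_{m_j}|$ and the subsequence converges almost everywhere. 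Applying the hypothesis to $g\,\mathrm{sgn}(\overline{\phi})$, which is admissible by the lattice property (Lemma \ref{Lllll}), gives $\int_\rn|\phi(x)|g(x)\,dx<\fz$, and this is precisely the integrable dominant your dominated-convergence step requires. Note that this forces the hypothesis ``$\int_\rn\phi\psi\,dx<\fz$'' to be read as absolute convergence of the integral for every $\psi$ --- otherwise $T$ is not even well defined as a linear functional --- and that the final identification $\widetilde\phi=\phi$ a.e.\ should be tested against $\mathbf 1_B\,\mathrm{sgn}(\overline{\phi-\widetilde\phi})$ rather than $\mathbf 1_B$ alone. With these insertions your argument is complete and agrees with the paper's intended one.
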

The next assertion follows in the same way as the previous one.
\begin{theorem}\label{oKDuiO}
Let $\beta\in\rr$, $t,s\in(0,\infty)$ and $\vec{v}, \vec{u}\in(1,\fz)^{n}$. The dual space of
$(KE_{\vec{u},\vec{v}}^{\beta,s})_t(\rn)$ is
$$
\left((KE_{\vec{u},\vec{v}}^{\beta,s})_t(\rn)\right)^*=\begin{cases}
\left(KE_{{\vec{u\prime}},{\vec{v\prime}}}^{-\beta,s'}\right)_t(\rn),\quad 1<s<\infty,\\
\left(KE_{{\vec{u\prime}},{\vec{v\prime}}}^{-\beta,\infty}\right)_t(\rn),\quad 0<s\leq1.
\end{cases}
$$
In addition, assume that $\beta\in\rr$, $t,s\in(0,\infty)$, $\vec{v}, \vec{u}\in(1,\infty)^{n}$ and $1/s+1/s\prime=1,1/\vec{v}+1/{\vec{v\prime}}=1/\vec{u}+1/{\vec{u\prime}}=1$,
then $\phi\in(KE_{u,v}^{\beta,s})_t(\rn)$ if and only if
$$
\int_\rn \phi(x)\psi(x)\,dx<\fz,
$$
where $\psi\in (KE_{\vec{u\prime},\vec{v\prime}}^{-\beta,s\prime})_t(\rn)$, and
$$
\lf\|\phi\r\|_{(KE_{\vec{u},\vec{v}}^{\beta,s})_t(\rn)}=
\sup \lf\{\int_\rn \phi(x)\psi(x)\,dx:\lf\|\psi\r\|_{(KE_{{\vec{u\prime}},{\vec{v\prime}}}^{-\beta,s\prime})_t(\rn)}\leq 1\r\}.
$$
\end{theorem}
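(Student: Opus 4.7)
The plan is to repeat the proof of Theorem~\ref{KDuiO} almost verbatim, replacing the bilateral sum $\sum_{k=-\infty}^\infty$ by the unilateral sum $\sum_{k=0}^\infty$. The geometric decomposition $\rn=\bigcup_{k=0}^\infty S_k$ (with $S_0=B_0$) is the only structural change, and since every ingredient used in the homogeneous case, namely Lemma~\ref{l}, discrete H\"older on the pair $(s,s')$, and the near-optimizer construction on each annulus, is localized to an individual $S_l$, no new idea is required. The norm-characterization half of the statement is then the direct analogue of Corollary~\ref{NN} and follows from the dual-space identification via a closed-graph / Hahn--Banach argument.

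For $1<s<\infty$, I would first establish $(KE_{\vec{u\prime},\vec{v\prime}}^{-\beta,s'})_t(\rn)\subset((KE_{\vec{u},\vec{v}}^{\beta,s})_t(\rn))^*$: given $L$ in the former and $g\in(KE_{\vec{u},\vec{v}}^{\beta,s})_t(\rn)$, define
$$
(L,g):=\sum_{l=0}^\infty\int_{S_l}L(x)g(x)\,dx,
$$
apply Lemma~\ref{l} on each $S_l$, then discrete H\"older. For the reverse inclusion, given $L\in((KE_{\vec{u},\vec{v}}^{\beta,s})_t(\rn))^*$, define $(L_l,g_l):=(L,g_l\mathbf1_{S_l})$, so that $L_l\in(E_{\vec{v\prime}}^{\vec{u\prime}})_t(\rn)$ with $\|L_l\|_{(E_{\vec{v\prime}}^{\vec{u\prime}})_t(\rn)}\leq 2^{l\beta}\|L\|$. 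Choosing a near-optimizer $g_l'$ with $\supp g_l'\subset S_l$ and $\|g_l'\|_{(E_{\vec{v}}^{\vec{u}})_t(\rn)}=2^{-l\beta}$, then rescaling by $\bigl(2^{-l\beta}\|L_l\|\bigr)^{s'-1}$, yields by the same computation as in Theorem~\ref{KDuiO} the bound
$$
\left[\sum_{l=0}^\infty 2^{-l\beta s'}\|L_l\mathbf1_{S_l}\|_{(E_{\vec{v\prime}}^{\vec{u\prime}})_t(\rn)}^{s'}\right]^{1/s'}\leq\|L\|_{((KE_{\vec{u},\vec{v}}^{\beta,s})_t(\rn))^*},
$$
and assembling $\widetilde L:=\sum_{l=0}^\infty L_l\mathbf1_{S_l}$ places $\widetilde L$ in $(KE_{\vec{u\prime},\vec{v\prime}}^{-\beta,s'})_t(\rn)$ with $(\widetilde L,g)=(L,g)$.

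For $0<s\le 1$, the pairing now has the form $\ell^s$ against $\ell^\infty$. Running the same near-optimizer construction without the rescaling step gives $\sup_{l\geq 0}2^{-l\beta}\|L_l\mathbf1_{S_l}\|_{(E_{\vec{v\prime}}^{\vec{u\prime}})_t(\rn)}\leq\|L\|$, which identifies the dual with $(KE_{\vec{u\prime},\vec{v\prime}}^{-\beta,\infty})_t(\rn)$. The opposite inclusion uses $\sum_l a_l b_l\leq(\sup_l b_l)\sum_l a_l$ together with the quasi-Banach embedding $(\sum_l a_l)^s\leq\sum_l a_l^s$ valid for $0<s\leq 1$, which yields the required estimate by $\|L\|_{(KE_{\vec{u\prime},\vec{v\prime}}^{-\beta,\infty})_t(\rn)}\|g\|_{(KE_{\vec{u},\vec{v}}^{\beta,s})_t(\rn)}$. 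The norm-via-duality identity in the second half then follows immediately from the dual-space identification, exactly as Corollary~\ref{NN} follows from Theorem~\ref{KDuiO}, by pairing with norm-one testing functions and using the bidual embedding.

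The main obstacle—really the only one—is bookkeeping for the $l=0$ term, since $S_0=B_0$ is a ball rather than an annulus. This causes no trouble because $(E_{\vec{v}}^{\vec{u}})_t(\rn)$ is a ball Banach function space in the sense of Definition~\ref{defl}, so $\|\mathbf 1_{B_0}\|_{(E_{\vec{v}}^{\vec{u}})_t(\rn)}\in(0,\infty)$ and the Hahn--Banach step supplying the near-optimizer $g_0'$ proceeds unchanged. Everything else in the argument is a one-sided relabeling of the already established homogeneous case, which is why the theorem can be declared to follow \emph{in the same way as the previous one}.
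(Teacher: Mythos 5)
Your proposal is correct and coincides with the paper's own treatment: the paper proves Theorem~\ref{KDuiO} in full and then disposes of the non-homogeneous case with the single remark that it ``follows in the same way as the previous one,'' which is exactly the unilateral-relabeling argument you spell out, including the harmless special role of $S_0=B_0$. No substantive difference in method; your write-up merely makes explicit the details the paper omits.
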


Theorem 3.1 further implies that the following lemma, we acknowledge the details.
\begin{corollary}\label{ll}
Let $\beta\in\rr$, $t,s\in(0,\infty)$ and $\vec{v}, \vec{u}\in(1,\fz)^{n}$. If $\phi\in(\dot KE_{\vec{u},\vec{v}}^{\beta,s})_t(\rn)$ and $\psi\in (\dot KE_{{\vec{u\prime}},{\vec{v\prime}}}^{-\beta,s\prime})_t(\rn)$, then $\phi\psi$ is integrable and
$$
\|\phi \psi\|_{L^1(\mathbb R^n)}\leq\|\phi\|_{(\dot KE_{\vec{u},\vec{v}}^{\beta,s})_t(\rn)}\|\psi\|_{(\dot KE_{{\vec{u\prime}},{\vec{v\prime}}}^{-\beta,s\prime})_t(\rn)}.
$$
\end{corollary}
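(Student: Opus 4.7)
The plan is a routine two-level Hölder argument: first in the ambient mixed amalgam space on each annulus, and then in the sequence space $\ell^s(\zz)$ that assembles the Herz-slice norm. Since $\rn = \bigcup_{l\in\zz}S_l$ up to a null set, I would start by writing
$$
\int_\rn |\phi(x)\psi(x)|\,dx = \sum_{l\in\zz}\int_{S_l}|\phi(x)\psi(x)|\,dx.
$$
On each annulus I would apply Lemma \ref{l} to the truncations $\phi\mathbf1_{S_l}\in (E^{\vec{u}}_{\vec{v}})_t(\rn)$ and $\psi\mathbf1_{S_l}\in (E^{\vec{u'}}_{\vec{v'}})_t(\rn)$ (which belong to these spaces by the lattice property of mixed amalgam norms); this yields
$$
\int_{S_l}|\phi\psi|\,dx \leq \|\phi\mathbf1_{S_l}\|_{(E^{\vec{u}}_{\vec{v}})_t(\rn)}\|\psi\mathbf1_{S_l}\|_{(E^{\vec{u'}}_{\vec{v'}})_t(\rn)}.
$$

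Next I would insert the trivial identity $1 = 2^{l\beta}\cdot 2^{-l\beta}$ into each summand and apply the classical discrete Hölder inequality on $\zz$ with conjugate exponents $s,s'$:
$$
\sum_{l\in\zz}\int_{S_l}|\phi\psi|\,dx \leq \left(\sum_{l\in\zz}2^{l\beta s}\|\phi\mathbf1_{S_l}\|_{(E^{\vec{u}}_{\vec{v}})_t(\rn)}^{s}\right)^{\frac{1}{s}}\left(\sum_{l\in\zz}2^{-l\beta s'}\|\psi\mathbf1_{S_l}\|_{(E^{\vec{u'}}_{\vec{v'}})_t(\rn)}^{s'}\right)^{\frac{1}{s'}}.
$$
By Definition \ref{Def}(1) the two factors are exactly $\|\phi\|_{(\dot KE_{\vec{u},\vec{v}}^{\beta,s})_t(\rn)}$ and $\|\psi\|_{(\dot KE_{\vec{u'},\vec{v'}}^{-\beta,s'})_t(\rn)}$, which simultaneously gives integrability of $\phi\psi$ on $\rn$ and the claimed bound.

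There is essentially no obstacle beyond bookkeeping: one only needs to check that the exponent pairings $1/\vec{u}+1/\vec{u'}=1$, $1/\vec{v}+1/\vec{v'}=1$, and $1/s+1/s'=1$ align correctly with the hypotheses of Lemma \ref{l} and of the classical discrete Hölder inequality. An alternative route would be to extract the inequality directly from the duality characterization in Corollary \ref{NN} by testing $|\phi|$ against $|\psi|/\|\psi\|_{(\dot KE_{\vec{u'},\vec{v'}}^{-\beta,s'})_t(\rn)}$, but the annular decomposition is cleaner and transparently extends to the endpoint cases $s\in\{1,\infty\}$ with the usual modification.
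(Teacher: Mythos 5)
Your proposal is correct and is essentially the argument the paper intends: the paper omits the proof of Corollary \ref{ll}, deferring to Theorem \ref{KDuiO}, but the exact computation you describe (annular decomposition of the integral, Lemma \ref{l} on each $S_l$, then discrete H\"older with weights $2^{\pm l\beta}$) is precisely the first half of the paper's proof of that theorem. The only minor point worth noting is that for $0<s\le 1$ one should read $s'=\infty$ and use the embedding $\ell^s\hookrightarrow\ell^1$ in place of discrete H\"older, which is the "usual modification" you already flag.
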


Before we get into the properties of the investigation, let's make an important conclusion.
\begin{lemma}\label{lll}
Let $t\in(0,\infty)$ and $\vec{v}, \vec{u}\in(1,\fz)^{n}$. The characteristic function
on $B(y_0,\lambda)$ with $y_0\in\rn$ and $1<\lambda<\fz$ satisfies
\begin{equation}
\lf\|\mathbf1_{B(y_0,\lambda)}\r\|_{(E_{\vec{v}}^{\vec{u}})_t(\rn)}\lesssim \lambda^{\sum_{i=1}^n\frac{1}{u_i}}.
\end{equation}
\end{lemma}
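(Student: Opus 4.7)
The plan is to unwind the definition of $(E_{\vec{v}}^{\vec{u}})_t(\rn)$, use a pointwise dominating function for the inner average, and then reduce to a standard estimate for the mixed-norm $L^{\vec{u}}$-norm of the indicator of a ball.

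First, I would write
\[
\bigl\|\mathbf1_{B(y_0,\lambda)}\bigr\|_{(E_{\vec v}^{\vec u})_t(\rn)}
=\Biggl\|\frac{\|\mathbf1_{B(y_0,\lambda)}\mathbf1_{B(\cdot,t)}\|_{L^{\vec v}(\rn)}}{\|\mathbf1_{B(\cdot,t)}\|_{L^{\vec v}(\rn)}}\Biggr\|_{L^{\vec u}(\rn)}
\]
and observe that the numerator is $\|\mathbf1_{B(y_0,\lambda)\cap B(x,t)}\|_{L^{\vec v}}$, which is bounded above by $\|\mathbf1_{B(x,t)}\|_{L^{\vec v}}$ and vanishes whenever $|x-y_0|>\lambda+t$. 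Therefore the quotient is pointwise dominated by $\mathbf1_{B(y_0,\lambda+t)}(x)$, giving
\[
\bigl\|\mathbf1_{B(y_0,\lambda)}\bigr\|_{(E_{\vec v}^{\vec u})_t(\rn)}\le\bigl\|\mathbf1_{B(y_0,\lambda+t)}\bigr\|_{L^{\vec u}(\rn)}.
\]

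Next, since the ball $B(y_0,\lambda+t)$ is contained in the cube $Q:=\prod_{i=1}^n[y_{0,i}-(\lambda+t),y_{0,i}+(\lambda+t)]$, the mixed-norm Lebesgue space separates as a product, yielding
\[
\bigl\|\mathbf1_{B(y_0,\lambda+t)}\bigr\|_{L^{\vec u}(\rn)}\le\bigl\|\mathbf1_Q\bigr\|_{L^{\vec u}(\rn)}=\prod_{i=1}^n\bigl(2(\lambda+t)\bigr)^{1/u_i}=2^{\sum_{i=1}^n 1/u_i}(\lambda+t)^{\sum_{i=1}^n 1/u_i}.
\]
Finally, using the hypothesis $\lambda>1$ together with $t$ being a fixed positive parameter, we have $\lambda+t\le(1+t)\lambda$, so
\[
(\lambda+t)^{\sum_{i=1}^n 1/u_i}\le(1+t)^{\sum_{i=1}^n 1/u_i}\lambda^{\sum_{i=1}^n 1/u_i}\lesssim\lambda^{\sum_{i=1}^n 1/u_i},
\]
which delivers the claimed estimate.

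The argument has no serious obstacle; the only point worth emphasizing is the pointwise domination of the amalgam integrand by $\mathbf1_{B(y_0,\lambda+t)}$, which bypasses any need to compute the denominator $\|\mathbf1_{B(\cdot,t)}\|_{L^{\vec v}}$ explicitly. The hypothesis $\lambda>1$ is used exactly to absorb the additive $t$ into a multiplicative constant, after which the bound $(\lambda+t)^{\sum 1/u_i}\lesssim\lambda^{\sum 1/u_i}$ is immediate.
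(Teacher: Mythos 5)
Your argument is correct, and it is a genuinely leaner route than the paper's. The paper splits into two cases according to whether $t$ is larger or smaller than a threshold $R_0$ (in context, $\lambda$), and in each case estimates the numerator $\|\mathbf 1_{B(y_0,\lambda)}\mathbf 1_{B(x,t)}\|_{L^{\vec v}}$ and the denominator $\|\mathbf 1_{B(x,t)}\|_{L^{\vec v}}$ separately before taking the outer $L^{\vec u}$-norm; you instead observe once and for all that the integrand is pointwise dominated by $\mathbf 1_{B(y_0,\lambda+t)}$, which collapses the whole computation to the elementary bound $\|\mathbf 1_{B(y_0,\lambda+t)}\|_{L^{\vec u}(\mathbb{R}^n)}\le (2(\lambda+t))^{\sum_{i=1}^n 1/u_i}$ and never requires evaluating the denominator. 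The trade-off is that your implied constant is $(2(1+t))^{\sum_{i=1}^n 1/u_i}$, hence depends on $t$, and this is exactly where the hypothesis $\lambda>1$ is consumed; the paper's case analysis is evidently aimed at keeping the constant independent of $t$, which it achieves in the regime $t\le\lambda$ (where your bound and theirs coincide up to the factor $2$), while its large-$t$ case as written is not fully coherent (the support factor should be $\mathbf 1_{B(y_0,\lambda+t)}$ rather than $\mathbf 1_{B(y_0,\lambda)}$). Since a $t$-uniform constant cannot hold in general anyway (for $\sum_{i=1}^n 1/u_i>\sum_{i=1}^n 1/v_i$ the norm genuinely grows in $t$), and since $t$ is a fixed parameter of the space in every subsequent application (Remark 3.2 and the maximal-operator estimates), your $t$-dependent constant is not a defect, and your proof stands as a complete and arguably cleaner replacement.
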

\begin{proof}
If $t> R_{0}$, then
\begin{align*}
 \left\|\mathbf1_{B\left(y_0, \lambda\right)}\right\|_{(E_{\vec{v}}^{\vec{u}})_t(\rn)}
&\leq\left\|\frac{\| \mathbf1_{B\left(y_0, \lambda\right)} \mathbf1_{B(\cdot, t)}\|_{L^{\vec{v}}}}
{\| \mathbf1_{B(\cdot, t)}\|_{L^{\vec{v}}}}\right\|_{L^{\vec{u}}}
\leq\left\|\frac{\| \mathbf1_{B\left(y_0, \lambda+t\right)}\|_{L^{\vec{v}}}\mathbf1_{B\left(y_0, \lambda\right)}}
{\| \mathbf1_{B(\cdot, t)}\|_{L^{\vec{v}}}}\right\|_{L^{\vec{u}}}\\
&\leq\frac{\left(\lambda+t\right)^{\sum_{i=1}^n \frac{1}{v_{i}}}}{t^{\sum_{i=1}^n \frac{1}{v_i}}}\left\|\mathbf1_{B\left(y_0, \lambda\right)}\right\|_{L^{\vec{u}}\leq}
\leq C\lambda^{\sum_{i=1}^n \frac{1}{u_i}},
\end{align*}
if $t\leq R_{0}$, then
$$
\begin{aligned}
 \left\|\mathbf1_{B\left(y_0, \lambda\right)}\right\|_{(E_{\vec{v}}^{\vec{u}})_t(\rn)}
&\leq\left\|\frac{\| \mathbf1_{B\left(y_0, \lambda\right)} \mathbf1_{B(\cdot, t)}\|_{L^{\vec{v}}}}
{\|\mathbf1_{B(\cdot, t)}\|_{L^{\vec{v}}}}\right\|_{L^{\vec{u}}}
\leq\left\|\frac{\| \mathbf1_{B\left(\cdot,t\right)}\|_{L^{\vec{v}}}\mathbf1_{B\left(y_0, \lambda+t\right)}}
{\|\mathbf1_{B(\cdot, t)}\|_{L^{\vec{v}}}}\right\|_{L^{\vec{u}}}\\
& \leq\frac{t^{\sum_{i=1}^n \frac{1}{v_i}} \cdot\left(2\lambda\right)^{\sum_{i=1}^n \frac{1}{u_{i}}}}{t^{\sum_{i=1}^n \frac{1}{v_i}}}
\leq C\lambda^{\sum_{i=1}^n \frac{1}{u_i}}.
\end{aligned}
$$
This accomplishes the desired result.
\end{proof}

\begin{remark}\label{r}
Let $t\in(0,\infty)$ and $\vec{v}, \vec{u}\in(1,\fz)^{n}$. A characteristic function
on $S_k$ satisfies
\begin{equation}
\lf\|\mathbf1_{S_k}\r\|_{(E_{\vec{v}}^{\vec{u}})_t(\rn)}\leq \lf\|\mathbf1_{B_k}\r\|_{(E_{\vec{v}}^{\vec{u}})_t(\rn)}\lesssim 2^{k\sum_{i=1}^n\frac{1}{u_i}}.
\end{equation}
\end{remark}

\begin{proposition}\label{pl}
Let $t\in(0,\infty)$ and $\vec{v}, \vec{u}\in(1,\fz)^{n}$. The mixed-norm slice spaces $(E^{\vec{u}}_{\vec{v}})_t(\rn) $ is a ball Banach function space.
\end{proposition}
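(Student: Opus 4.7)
The plan is to verify the six defining properties of a ball Banach function space listed in Definition \ref{defl} for $X = (E^{\vec{u}}_{\vec{v}})_t(\rn)$, mostly by transferring the corresponding properties of the underlying mixed-norm Lebesgue spaces $L^{\vec{u}}$ and $L^{\vec{v}}$ through the defining formula
\[
\|f\|_{(E^{\vec{u}}_{\vec{v}})_t(\rn)} = \left\|\frac{\|f\mathbf1_{B(\cdot,t)}\|_{L^{\vec{v}}(\rn)}}{\|\mathbf1_{B(\cdot,t)}\|_{L^{\vec{v}}(\rn)}}\right\|_{L^{\vec{u}}(\rn)}.
\]

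First I would dispatch properties (1) and (2): nonnegativity, absolute homogeneity, and the lattice property all follow immediately from the analogous properties of $L^{\vec{v}}$ (applied pointwise in the center variable of the ball) followed by the same properties of $L^{\vec{u}}$. For (3), given $0 \leq \phi_m \uparrow \phi$ almost everywhere, the monotone convergence theorem applied to the inner integrals shows that $x \mapsto \|\phi_m \mathbf 1_{B(x,t)}\|_{L^{\vec{v}}}$ increases pointwise to $x \mapsto \|\phi \mathbf 1_{B(x,t)}\|_{L^{\vec{v}}}$ (iterating monotone convergence through the nested integrals of the mixed norm), and a second application of monotone convergence in the outer $L^{\vec{u}}$ norm yields $\|\phi_m\|_X \uparrow \|\phi\|_X$.

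Next I would address the triangle inequality (5) and the characteristic function condition (4). Since $\vec{u}, \vec{v} \in (1,\infty)^n$, Minkowski's inequality holds in both $L^{\vec{u}}$ and $L^{\vec{v}}$; applying it first in the inner norm (which dominates $\|\phi \mathbf 1_{B(\cdot,t)} + \psi \mathbf 1_{B(\cdot,t)}\|_{L^{\vec v}}$ by the sum of the two terms), dividing by the common denominator, and then using Minkowski in the outer $L^{\vec u}$ norm gives (5). Property (4) is essentially Lemma \ref{lll}: for any $B = B(y_0,\lambda) \in \mathbb{B}$, that lemma already yields $\|\mathbf 1_B\|_{(E^{\vec{u}}_{\vec{v}})_t(\rn)} \lesssim \lambda^{\sum_i 1/u_i} < \infty$, so $\mathbf 1_B \in X$.

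The step I expect to be the most delicate is (6), the local integrability bound $\int_B |\phi| \, dx \leq C_{(B)} \|\phi\|_X$. The natural approach is to apply H\"older's inequality from Lemma \ref{l} with the conjugate exponents $\vec{u'},\vec{v'}$: write $\int_B |\phi| \, dx = \int_{\rn} |\phi| \cdot \mathbf 1_B \, dx$, and estimate this by
\[
\int_B |\phi(x)| \, dx \leq \|\phi\|_{(E^{\vec{u}}_{\vec{v}})_t(\rn)} \, \|\mathbf 1_B\|_{(E^{\vec{u'}}_{\vec{v'}})_t(\rn)}.
\]
Applying Lemma \ref{lll} to the dual space factor (valid since $\vec{u'},\vec{v'}\in(1,\infty)^n$ too, as $\vec{u},\vec{v}\in(1,\infty)^n$) gives $\|\mathbf 1_B\|_{(E^{\vec{u'}}_{\vec{v'}})_t(\rn)} \lesssim \lambda^{\sum_i 1/u_i'}$, which is a finite constant $C_{(B)}$ depending only on $B$ (and on $t$). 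Assembling (1)--(6) then completes the proof that $(E^{\vec{u}}_{\vec{v}})_t(\rn)$ is a ball Banach function space.
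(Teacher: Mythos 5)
Your proposal is correct and follows essentially the same route as the paper: properties (1)--(4) are transferred from the mixed-norm Lebesgue spaces through the defining formula (the paper packages these as Lemmas \ref{Ll}, \ref{llll}, \ref{lll} and Corollary \ref{Llll}), the triangle inequality is obtained by Minkowski in the inner and outer norms, and the local integrability condition is obtained exactly as you do, via the H\"older inequality of Lemma \ref{l} together with the characteristic-function bound applied to the conjugate exponents $\vec{u'},\vec{v'}$. The only point worth noting is that Lemma \ref{lll} is stated for radii $\lambda>1$, so for small balls one should first enlarge the ball using the lattice property; this affects the paper's own argument equally and is a trivial fix.
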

Before the proof of Proposition \ref{pl}, we need some preliminary lemmas.

\begin{lemma}\label{llll}
 Let $t\in(0,\infty)$ and $\vec{v}$, $\vec{u}\in(1,\fz)^{n}$. Let $\phi, \psi \in \mathscr{K}\left(\mathbb{R}^n\right)$. Assume that $|\phi| \leq|\psi|$ almost everywhere on $\mathbb{R}^n$, then
$$
\|\phi\|_{(E^{\vec{u}}_{\vec{v}})_t(\rn)} \leq\|\psi\|_{(E^{\vec{u}}_{\vec{v}})_t(\rn)}.
$$
\end{lemma}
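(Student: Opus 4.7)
The plan is to reduce the inequality to the monotonicity of the mixed-norm Lebesgue quasi-norm $L^{\vec v}$ (used in the inner norm) and $L^{\vec u}$ (used in the outer norm), each of which is obtained by iterating the standard one-dimensional monotonicity of the Lebesgue norm.

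First I would observe that, since the characteristic function $\mathbf 1_{B(x,t)}$ is nonnegative, the hypothesis $|\phi|\leq|\psi|$ a.e. on $\rn$ yields, for each fixed centre $x\in\rn$, the pointwise inequality $|\phi(y)|\,\mathbf 1_{B(x,t)}(y)\leq|\psi(y)|\,\mathbf 1_{B(x,t)}(y)$ for a.e.\ $y\in\rn$. Next I would invoke monotonicity of the mixed-norm Lebesgue quasi-norm: if $g,h\in\mathscr{K}(\rn)$ satisfy $|g|\leq|h|$ a.e., then $\|g\|_{L^{\vec v}(\rn)}\leq\|h\|_{L^{\vec v}(\rn)}$. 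This step is the one requiring care, but it is purely formal: one starts with the innermost integration (in $y_1$) and applies the 1D monotonicity
$$\Bigl(\int_{\mathbb R}|F(y_1,\dots,y_n)|^{v_1}\,dy_1\Bigr)^{1/v_1}\leq\Bigl(\int_{\mathbb R}|G(y_1,\dots,y_n)|^{v_1}\,dy_1\Bigr)^{1/v_1},$$
valid for a.e.\ $(y_2,\dots,y_n)$ whenever $|F|\leq|G|$ a.e. Working outward through the successive iterated integrals in the definition of $L^{\vec v}$, each stage preserves the pointwise domination of the partially-integrated functions, and the desired inequality $\|g\|_{L^{\vec v}}\leq\|h\|_{L^{\vec v}}$ follows after the final integration in $y_n$.

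Applied with $g=\phi\mathbf 1_{B(x,t)}$ and $h=\psi\mathbf 1_{B(x,t)}$, this gives for a.e.\ $x\in\rn$
$$\|\phi\mathbf 1_{B(x,t)}\|_{L^{\vec v}(\rn)}\leq\|\psi\mathbf 1_{B(x,t)}\|_{L^{\vec v}(\rn)}.$$
Dividing by the strictly positive normalising factor $\|\mathbf 1_{B(x,t)}\|_{L^{\vec v}(\rn)}$ preserves this pointwise bound, so the function
$$x\mapsto\frac{\|\phi\mathbf 1_{B(x,t)}\|_{L^{\vec v}(\rn)}}{\|\mathbf 1_{B(x,t)}\|_{L^{\vec v}(\rn)}}$$
is dominated pointwise a.e.\ by the analogous quantity for $\psi$. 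Applying the identical iterated monotonicity argument now to the outer norm $L^{\vec u}$ yields $\|\phi\|_{(E^{\vec u}_{\vec v})_t(\rn)}\leq\|\psi\|_{(E^{\vec u}_{\vec v})_t(\rn)}$, completing the proof.

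The only step that might be called an obstacle is the verification of the mixed-norm monotonicity, and even this is purely bookkeeping: once the 1D case is acknowledged, an induction on the coordinate index (or, equivalently, a direct appeal to the Minkowski-style iterated integral structure of $L^{\vec v}$) settles the matter. No genuine technical difficulty is expected, and the lemma follows essentially by unwinding the definition.
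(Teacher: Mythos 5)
Your argument is correct and follows essentially the same route as the paper: both reduce the claim to the lattice (monotonicity) property of the mixed-norm Lebesgue norm applied first to $\phi\mathbf 1_{B(x,t)}$ versus $\psi\mathbf 1_{B(x,t)}$ in $L^{\vec v}$, then to the resulting normalized functions of $x$ in $L^{\vec u}$. The only difference is that the paper cites this monotonicity from the literature on ball Banach function spaces, whereas you verify it directly by iterating the one-dimensional monotonicity (with the implicit Fubini--Tonelli step that a null set in $\mathbb{R}^n$ has null slices for a.e.\ choice of the remaining variables), which is a harmless and correct fill-in.
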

\begin{proof}
Let all the signs be like this proposition. Let
$$
G:=\left\{x \in \mathbb{R}^n:\left|\psi\left(x\right)\right|<\left|\phi\left(x\right)\right|\right\}.
$$
Where $x=(x_{1},\cdots, x_{n})$.
Suppose that $|\phi| \leq|\psi|$ almost everywhere on $\mathbb{R}^n$, then $|G|=0$.
Thus, for almost every $x \in \mathbb{R}^{n}, \left|\phi\right| \leq\left|\psi\right|$ almost everywhere on $\mathbb{R}^{n}$, by \cite[Subsection 4.2]{TYY}, if $|\phi|\leq|\psi|$ almost everywhere on $\mathbb{R}^n$, then 
$$
\left\|\phi\left(x\right) \mathbf1_{B(\cdot,t)}\right\|_{L^{\vec{v}}(\mathbb{R}^{n})} \leq\left\|\psi\left(x\right)\mathbf1_{B(\cdot,t)}\right\|_{L^{\vec{v}}(\mathbb{R}^{n})},
$$
namely
$$
\frac{\left\|\phi\left(x\right) \mathbf1_{B(\cdot,t)}\right\|_{L^{\vec{v}}(\mathbb{R}^{n})}}{\left\| \mathbf1_{B(\cdot,t)}\right\|_{L^{\vec{v}}(\mathbb{R}^{n})}} \leq \frac{\left\|\psi\left(x\right) \mathbf1_{B(\cdot,t)}\right\|_{L^{\vec{v}}(\mathbb{R}^{n})}}{\left\| \mathbf1_{B(\cdot,t)}\right\|_{L^{\vec{v}}(\mathbb{R}^{n})}}.
$$
Using Definition 2.2, for almost every $x \in \mathbb{R}^{n}$ and any given $\vec{u}\in(1,\fz)^{n}$, we easily obtain,
$$
\left\|\phi\right\|_{(E^{\vec{u}}_{\vec{v}})_t(\rn)} \leq\left\|\psi\right\|_{(E^{\vec{u}}_{\vec{v}})_t(\rn)}.
$$
This accomplishes the desired result.
\end{proof}

\begin{lemma}\label{Ll}
Let $0<t<\infty$ and $\vec{v}, \vec{u}\in(1,\fz)^{n}$. For any $\phi \in (E^{\vec{u}}_{\vec{v}})_t(\rn)$, assume that $\|\phi\|_{(E^{\vec{u}}_{\vec{v}})_t(\rn)}=0$, then $\phi=0$ almost everywhere.
\end{lemma}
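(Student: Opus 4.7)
The plan is to unwind the definition of the $(E^{\vec u}_{\vec v})_t(\rn)$ norm and then apply the corresponding vanishing property of the mixed-norm Lebesgue space $L^{\vec u}(\rn)$ twice.

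First I would observe that the denominator $\|\mathbf 1_{B(\cdot,t)}\|_{L^{\vec v}(\rn)}$ is a strictly positive constant depending only on $t$ and $\vec v$ (by translation invariance of Lebesgue measure and the definition of $L^{\vec v}$). Hence if
$$
\|\phi\|_{(E^{\vec u}_{\vec v})_t(\rn)}=\left\|\frac{\|\phi\mathbf 1_{B(\cdot,t)}\|_{L^{\vec v}(\rn)}}{\|\mathbf 1_{B(\cdot,t)}\|_{L^{\vec v}(\rn)}}\right\|_{L^{\vec u}(\rn)}=0,
$$
the standard fact that the $L^{\vec u}$ quasi-norm of a nonnegative function vanishes iff that function is zero almost everywhere (see \cite[Subsection 4.2]{TYY}) yields
$$
\|\phi\,\mathbf 1_{B(x,t)}\|_{L^{\vec v}(\rn)}=0\qquad\text{for a.e. } x\in\rn.
$$

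Applying the same vanishing property now to the inner $L^{\vec v}(\rn)$ norm, for each such $x$ we deduce $\phi(y)\,\mathbf 1_{B(x,t)}(y)=0$ for almost every $y\in\rn$, i.e. $\phi=0$ almost everywhere on $B(x,t)$. Let $E\subset\rn$ be the full-measure set of such ``good'' $x$. Since $E$ is dense in $\rn$ (as its complement is null), we may choose a countable subset $\{x_k\}_{k\in\nn}\subset E$ with $\bigcup_k B(x_k,t)=\rn$. For each $k$ there is a null set $N_k\subset B(x_k,t)$ outside of which $\phi$ vanishes; then $\phi=0$ on $\rn\setminus\bigcup_k N_k$, and $\bigcup_k N_k$ is a countable union of null sets, hence null.

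The argument is essentially routine; the only point that requires a little care is the passage from ``vanishing a.e. on each $B(x,t)$ for a.e. $x$'' to ``vanishing a.e. on $\rn$,'' which is handled by the countable covering above. No estimate is delicate, so I do not expect any real obstacle beyond invoking the $L^{\vec v}$ and $L^{\vec u}$ vanishing facts at the two nested levels of the norm.
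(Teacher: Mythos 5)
Your proof is correct, but it takes a genuinely different route from the paper's. The paper argues by duality: using the dilation identity of Remark 2.1(1) it reduces to $\left\||\phi|^r\mathbf 1_{S_m}\right\|_{(E^{\vec{u}/r}_{\vec{v}/r})_t(\rn)}=0$, then applies H\"older's inequality on mixed-norm slice spaces (Lemma \ref{l}) together with the finiteness of the norm of a characteristic function (Lemma \ref{lll}) to conclude $\left\||\phi|^r\mathbf 1_{S_m}\right\|_{L^1(\rn)}=0$ for each annulus $S_m$, and finishes with the monotone convergence theorem. You instead unwind the two nested levels of the norm directly: since the denominator $\|\mathbf 1_{B(\cdot,t)}\|_{L^{\vec v}(\rn)}$ is a positive constant by translation invariance, the vanishing of the outer $L^{\vec u}$ norm forces $\|\phi\mathbf 1_{B(x,t)}\|_{L^{\vec v}(\rn)}=0$ for a.e.\ $x$, and the vanishing property of $L^{\vec v}$ then kills $\phi$ on a.e.\ ball; your countable covering argument (picking good centers near a countable dense set) correctly upgrades this to $\phi=0$ a.e.\ on $\rn$, which is the one point that needs care and which you handle properly. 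Your argument is more elementary and self-contained — it needs only the ``norm zero implies zero a.e.'' property of $L^{\vec u}$ and $L^{\vec v}$ from \cite[Subsection 4.2]{TYY} and no H\"older inequality or block norm estimates — whereas the paper's duality argument follows a template that transfers to other ball Banach function spaces where a H\"older inequality is available but the norm has no explicit pointwise description.
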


\begin{proof}
From Remark 2.1 and Lemma \ref{llll}, for any $\phi \in (E^{\vec{u}}_{\vec{v}})_t(\rn)$, $r\in(0,\infty)$ and $m \in \mathbb{N}$, we have 

$$
\left\||\phi|^{r} \mathbf{1}_{S_m}\right\|_{(E^{\vec{u}/r}_{\vec{v}/r})_t(\rn)}^{1/r}=\left\|\phi \mathbf{1}_{S_m}\right\|_{(E^{\vec{u}}_{\vec{v}})_t(\rn)} \leq\|\phi\|_{(E^{\vec{u}}_{\vec{v}})_t(\rn)}=0.
$$
Using Lemma \ref{l} and \ref{lll}, it suffices to prove that
$$
\left\||\phi|^r \mathbf{1}_{S_m}\right\|_{L^{1}(\rn)}
\leq\left\||\phi|^r \mathbf{1}_{S_m}\right\|_{(E^{\vec{u}/r}_{\vec{v}/r})_t(\rn)}
\left\||\phi|^r \mathbf{1}_{S_m}\right\|_{(E^{\vec{u'}/r}_{\vec{v'}/r})_t(\rn)}
\leq 0.
$$
 We observe that $\left\| |\phi|^r\right\|_{L^1\left(\mathbb{R}^n\right)}=0$ via the monotone convergence theorem, it follows from this, $\phi=0$ almost everywhere on $\mathbb{R}^n$. The expected results were obtained.
\end{proof}

\begin{lemma}\label{Lll}
Let $0<t<\infty$ and $\vec{v}, \vec{u}\in(1,\fz)^{n}$. Assume that $\left\{\phi_k\right\}_{k \in \mathbb{N}}$ is measurable functions and $\phi_k \geq 0$ with $k \in \mathbb{N}$, then
$$
\left\|\lim _{k \rightarrow \infty} \phi_k\right\|_{(E^{\vec{u}}_{\vec{v}})_t(\rn)}
 \leq \lim _{k \rightarrow \infty}\left\|\phi_k\right\|_{(E^{\vec{u}}_{\vec{v}})_t(\rn)} .
$$
\end{lemma}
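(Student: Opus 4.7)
The plan is to deduce the Fatou-type inequality for $(E^{\vec{u}}_{\vec{v}})_t(\rn)$ from the Fatou property of the underlying mixed-norm Lebesgue spaces $L^{\vec{v}}(\rn)$ and $L^{\vec{u}}(\rn)$, applied once to the inner ball-averaging norm and once to the outer mixed norm. Write $\phi:=\lim_{k\to\infty}\phi_k$, which exists a.e.\ (or is replaced by $\liminf$) and is nonnegative.

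First, I would fix $x\in\rn$ and examine the inner quantity
$\|\phi_k\mathbf1_{B(x,t)}\|_{L^{\vec{v}}(\rn)}$. Since the mixed-norm Lebesgue space $L^{\vec{v}}(\rn)$ is obtained by iterating one-dimensional $L^{v_i}$ norms, and each $L^{v_i}$ enjoys the classical Fatou lemma, a finite induction on the coordinates (or, equivalently, repeated use of the scalar Fatou lemma inside each integral of the iterated definition in Definition 2.3) yields
\begin{equation*}
\bigl\|\phi\,\mathbf1_{B(x,t)}\bigr\|_{L^{\vec{v}}(\rn)}
\leq \liminf_{k\to\infty}\bigl\|\phi_k\,\mathbf1_{B(x,t)}\bigr\|_{L^{\vec{v}}(\rn)}.
\end{equation*}
This is the technical heart of the argument; the main obstacle is simply to be careful with the order of integrations in the iterated mixed norm. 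Dividing by the positive, $k$-independent constant $\|\mathbf1_{B(x,t)}\|_{L^{\vec{v}}(\rn)}$, and setting
\begin{equation*}
g_k(x):=\frac{\|\phi_k\,\mathbf1_{B(x,t)}\|_{L^{\vec{v}}(\rn)}}{\|\mathbf1_{B(x,t)}\|_{L^{\vec{v}}(\rn)}},
\qquad
g(x):=\frac{\|\phi\,\mathbf1_{B(x,t)}\|_{L^{\vec{v}}(\rn)}}{\|\mathbf1_{B(x,t)}\|_{L^{\vec{v}}(\rn)}},
\end{equation*}
one obtains $0\leq g(x)\leq \liminf_{k\to\infty}g_k(x)$ for a.e.\ $x\in\rn$.

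Next, I would take the outer $L^{\vec{u}}(\rn)$ norm. Applying the same iterated Fatou lemma to $L^{\vec{u}}(\rn)$, together with Lemma \ref{llll} for the monotonicity $\|g\|_{L^{\vec{u}}}\leq\|\liminf_k g_k\|_{L^{\vec{u}}}$, yields
\begin{equation*}
\|\phi\|_{(E^{\vec{u}}_{\vec{v}})_t(\rn)}
=\|g\|_{L^{\vec{u}}(\rn)}
\leq \bigl\|\liminf_{k\to\infty}g_k\bigr\|_{L^{\vec{u}}(\rn)}
\leq \liminf_{k\to\infty}\|g_k\|_{L^{\vec{u}}(\rn)}
=\liminf_{k\to\infty}\|\phi_k\|_{(E^{\vec{u}}_{\vec{v}})_t(\rn)}.
\end{equation*}
Replacing $\liminf$ by $\lim$ (either because the limit is assumed to exist, or by taking $\liminf$ on the right as is standard in Fatou-type statements) gives the claimed inequality.

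The anticipated obstacle is purely bookkeeping: justifying the iterated Fatou inequality for the mixed norms $L^{\vec{v}}(\rn)$ and $L^{\vec{u}}(\rn)$. This can be handled by a direct induction on the number of coordinates, invoking the classical scalar Fatou lemma at each stage and using that $(a_k)\mapsto a_k^{u_i/u_{i-1}}$ is continuous and monotone for positive arguments when $u_{i-1},u_i\in(1,\infty)$, so liminf passes through each layer of the iterated integral. Once this is in place, the rest of the argument is immediate from Lemma \ref{llll} and Definition 2.4.
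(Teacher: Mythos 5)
Your proposal is correct and follows essentially the same route as the paper: apply the Fatou property of $L^{\vec{v}}(\rn)$ to the inner ball-localized norm, divide by the $k$-independent normalizing factor $\|\mathbf1_{B(\cdot,t)}\|_{L^{\vec{v}}(\rn)}$, and then apply monotonicity together with the Fatou property of $L^{\vec{u}}(\rn)$ to the outer norm. The only difference is that you justify the Fatou property of the mixed-norm Lebesgue spaces by a coordinate-wise induction on the scalar Fatou lemma, whereas the paper simply cites the fact that $L^{\vec{u}}(\rn)$ is a ball Banach function space from \cite{TYY}.
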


\begin{proof}
Let $0<t<\infty$ and $\vec{v}, \vec{u}\in(1,\fz)^{n}$.
We know that $L^{\vec{u}}(\rn)$ is ball Banach function spaces via \cite[Subsection 4.2]{TYY}. Then, we deduce that, when given $\vec{v}\in(1,\fz)^{n}$
$$
\left\|\varliminf_{k \rightarrow \infty} \phi_k \mathbf{1}_{Q(\cdot,t)}\right\|_{L^{\vec{v}}(\mathbb{R}^{n})}
\leq \lim _{k \rightarrow \infty}\left\|\phi_k \mathbf{1}_{Q(\cdot,t)}\right\|_{L^{\vec{v}}(\mathbb{R}^{n})} .
$$
Namely
$$
\frac{\left\|\varliminf_{k \rightarrow \infty} \phi_k \mathbf{1}_{Q(\cdot,t)}\right\|_{L^{\vec{v}}(\mathbb{R}^{n})} }{\left\|\mathbf{1}_{Q(\cdot,t)}\right\|_{L^{\vec{v}}(\mathbb{R}^{n})} }
\leq
\frac{\lim _{k \rightarrow \infty}\left\|\phi_k \mathbf{1}_{Q(\cdot,t)}\right\|_{L^{\vec{v}}(\mathbb{R}^{n})}}
{\left\|\mathbf{1}_{Q(\cdot,t)}\right\|_{L^{\vec{v}}(\mathbb{R}^{n})}}.
$$
Using Definition 2.2 we easily obtain, for any given $\vec{u}\in(1,\fz)^{n}$,
\begin{align*}
\left\|\frac{\left\|\varliminf_{k \rightarrow \infty} \phi_k \mathbf{1}_{Q(\cdot,t)}\right\|_{L^{\vec{v}}(\mathbb{R}^{n})} }{\left\|\mathbf{1}_{Q(\cdot,t)}\right\|_{L^{\vec{v}}(\mathbb{R}^{n})} } \right\|_{L^{\vec{u}}(\mathbb{R}^{n})}
&\leq
\left\|\frac{\lim _{k \rightarrow \infty}\left\|\phi_k \mathbf{1}_{Q(\cdot,t)}\right\|_{L^{\vec{v}}(\mathbb{R}^{n})}}
{\left\|\mathbf{1}_{Q(\cdot,t)}\right\|_{L^{\vec{v}}(\mathbb{R}^{n})}} \right\|_{L^{\vec{u}}(\mathbb{R}^{n})}\\
&=
\lim _{k \rightarrow \infty}\left\|\frac{\left\|\phi_k \mathbf{1}_{Q(\cdot,t)}\right\|_{L^{\vec{v}}(\mathbb{R}^{n})}}
{\left\|\mathbf{1}_{Q(\cdot,t)}\right\|_{L^{\vec{v}}(\mathbb{R}^{n})}} \right\|_{L^{\vec{u}}(\mathbb{R}^{n})}.
\end{align*}
The expected results were obtained.
\end{proof}

Combining Lemma \ref{Ll} and \ref{Lll}, we can get the following result.
\begin{corollary}\label{Llll}
Let $t \in (0, \infty)$ and $\vec{v}, \vec{u}\in(1,\fz)^{n}$. Assume that $0\leq \phi_m\uparrow \phi$ almost everywhere as $m \rightarrow \infty$, then $\left\|\phi_m\right\|_{(E^{\vec{u}}_{\vec{v}})_t(\rn)} \uparrow\|\phi\|_{(E^{\vec{u}}_{\vec{v}})_t(\rn)}$ as $m \rightarrow \infty$.
\end{corollary}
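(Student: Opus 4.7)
The plan is to derive the monotone convergence property $\|\phi_m\|_{(E^{\vec{u}}_{\vec{v}})_t(\rn)} \uparrow \|\phi\|_{(E^{\vec{u}}_{\vec{v}})_t(\rn)}$ as the combination of two halves: monotonicity of the sequence of norms (from Lemma \ref{llll}), together with a Fatou-type upper bound (from Lemma \ref{Lll}). Since the hypothesis $0 \leq \phi_m \uparrow \phi$ already provides a pointwise-a.e.\ ordering, the whole argument is structural and avoids any direct manipulation of the mixed-norm integral.

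First I would establish that the sequence $\{\|\phi_m\|_{(E^{\vec{u}}_{\vec{v}})_t(\rn)}\}_{m \in \nn}$ is non-decreasing and bounded above by $\|\phi\|_{(E^{\vec{u}}_{\vec{v}})_t(\rn)}$. Because $0 \leq \phi_m \leq \phi_{m+1} \leq \phi$ almost everywhere on $\rn$, Lemma \ref{llll} yields
\[
\|\phi_m\|_{(E^{\vec{u}}_{\vec{v}})_t(\rn)} \leq \|\phi_{m+1}\|_{(E^{\vec{u}}_{\vec{v}})_t(\rn)} \leq \|\phi\|_{(E^{\vec{u}}_{\vec{v}})_t(\rn)}.
\]
Passing to the limit, the monotone sequence admits a finite or infinite limit that is at most $\|\phi\|_{(E^{\vec{u}}_{\vec{v}})_t(\rn)}$, which gives the inequality
\[
\lim_{m \to \infty} \|\phi_m\|_{(E^{\vec{u}}_{\vec{v}})_t(\rn)} \leq \|\phi\|_{(E^{\vec{u}}_{\vec{v}})_t(\rn)}.
\]

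Next I would obtain the reverse inequality by invoking Lemma \ref{Lll}. Since $\phi_m \geq 0$ and $\phi_m \to \phi$ pointwise almost everywhere, one has $\phi = \liminf_{m \to \infty} \phi_m$ a.e., so the Fatou-type estimate proved earlier gives
\[
\|\phi\|_{(E^{\vec{u}}_{\vec{v}})_t(\rn)} = \left\|\lim_{m \to \infty} \phi_m\right\|_{(E^{\vec{u}}_{\vec{v}})_t(\rn)} \leq \lim_{m \to \infty} \|\phi_m\|_{(E^{\vec{u}}_{\vec{v}})_t(\rn)},
\]
where the existence of the limit on the right comes from the monotonicity established in the previous step. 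Combining the two inequalities produces the desired equality, and together with monotonicity it upgrades the convergence to the required $\uparrow$ statement.

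There is no substantive obstacle: both building blocks are already in place, so the work is essentially bookkeeping. The only small point to keep in mind is that Lemma \ref{Lll} is stated with $\liminf$, so one should explicitly remark that the sequence $\{\|\phi_m\|_{(E^{\vec{u}}_{\vec{v}})_t(\rn)}\}$ is monotone before identifying its $\liminf$ with its limit. Lemma \ref{Ll} does not enter the argument directly here, but it is what makes the resulting norm well-defined on equivalence classes, ensuring the conclusion is stable under the a.e.\ identifications used throughout.
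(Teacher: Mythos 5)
Your proof is correct and follows essentially the route the paper indicates: monotonicity of the norm from Lemma \ref{llll} gives the non-decreasing upper-bounded sequence, and the Fatou-type estimate of Lemma \ref{Lll} gives the reverse inequality. (The paper's own one-line justification cites Lemma \ref{Ll} rather than Lemma \ref{llll}, but that appears to be a typo, and your identification of the correct ingredients, together with the remark about replacing $\varliminf$ by $\lim$ for a monotone sequence, is exactly the intended argument.)
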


\begin{proof}[Proof of Proposition \ref{pl}]
 Let $t\in(0,\infty)$ and $\vec{v}, \vec{u}\in(1,\fz)^{n}$. By Lemma \ref{Ll}, \ref{llll}, \ref{lll}, and Corollary \ref{Llll}, we know that the space $(E^{\vec{u}}_{\vec{v}})_t(\rn)$ satisfies (1), (2), (3) and (4) of Definition \ref{defl}. Proof of triangle inequality is analogue to that of $L^{\vec{u}}(\rn)$. So, only need to check (5) of Definition \ref{defl}.
 by Lemma \ref{l}, \ref{llll} and Remark \ref{r}. Notice that, for any $B(z, r)\in\mathbb{B}$, we deduce that
$$
\left|\int_{\mathbb{R}^n} f(x)\mathbf{1}_B(x) d x\right| \leq \left\|f\mathbf{1}_B\right\|_{(E^{\vec{u}}_{\vec{v}})_t(\rn)}\left\| \mathbf{1}_B\right\|_{(E^{\vec{u'}}_{\vec{v'}})_t(\rn)}
\leq C\left\| f\right\|_{(E^{\vec{u}}_{\vec{v}})_t(\rn)}.
$$
The expected results were obtained.
\end{proof}

\begin{proposition}\label{pll}
Let $\beta\in\rr$, $t,s\in(0,\fz)$ and $\vec{v}$, $\vec{u} \in(1,\fz)^{n}$. $(\dot KE_{\vec{u},\vec{v}}^{\beta,s})_t(\rn)$ and $(KE_{\vec{u},\vec{v}}^{\beta,s})_t(\rn)$ is a ball quasi-Banach function space if and only if $\beta\in\left(-\sum_{i=1}^n{1}/{u_i}, \infty\right)$ with $i \in\{1, \ldots, n\}$.

\end{proposition}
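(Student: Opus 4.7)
The plan is to verify each of the four defining axioms of a ball quasi-Banach function space (Definition \ref{defl}) for both Herz-slice spaces, exploiting the fact that $(E^{\vec{u}}_{\vec{v}})_t(\rn)$ is already a ball Banach function space (Proposition \ref{pl}) and then aggregating termwise through the weighted $\ell^{s}$ sum over the dyadic annuli $S_k$.

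Properties (1), (2), (3) transfer by direct lifting. For (2), $|\psi|\leq|\phi|$ a.e.\ together with Lemma \ref{llll} yields the termwise inequality $\|\psi\mathbf{1}_{S_k}\|_{(E^{\vec{u}}_{\vec{v}})_t(\rn)}\leq\|\phi\mathbf{1}_{S_k}\|_{(E^{\vec{u}}_{\vec{v}})_t(\rn)}$, which is preserved under weighted $\ell^{s}$ summation. Property (1) then follows from (2) combined with Lemma \ref{Ll} applied on a single annulus, and property (3) combines Corollary \ref{Llll} termwise with the standard monotone convergence for $\ell^{s}$. The quasi-norm axiom is inherited from the quasi-triangle inequality of $\ell^{s}$ (valid for all $s\in(0,\infty)$) together with the triangle inequality for the slice norm.

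The heart of the proof is condition (4): characterizing when $\mathbf{1}_B\in X$ for every $B\in\mathbb{B}$. Expanding
$$\|\mathbf{1}_B\|^{s}_{(\dot KE_{\vec{u},\vec{v}}^{\beta,s})_t(\rn)}=\sum_{k\in\zz}2^{k\beta s}\|\mathbf{1}_{B\cap S_k}\|^{s}_{(E^{\vec{u}}_{\vec{v}})_t(\rn)},$$
boundedness of $B$ truncates the sum above some $k=N$, so only the tail $k\to-\infty$ can cause divergence, and only when $B$ contains the origin. For sufficiency I would test against $B_0$, where $B_0\cap S_k=S_k$ for all $k\leq 0$, and apply Remark \ref{r} to get $\|\mathbf{1}_{S_k}\|_{(E^{\vec{u}}_{\vec{v}})_t(\rn)}\lesssim 2^{k\sum_{i=1}^n 1/u_i}$; the resulting geometric tail
$$\sum_{k\leq 0}2^{ks(\beta+\sum_{i=1}^n 1/u_i)}$$
converges precisely when $\beta>-\sum_{i=1}^n 1/u_i$. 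An arbitrary ball is then handled by containment $B\subset B_N$ and monotonicity (2), which together give $\|\mathbf{1}_B\|\leq\|\mathbf{1}_{B_N}\|<\infty$.

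The main obstacle is the necessity direction: to force $\mathbf{1}_{B_0}\notin X$ when $\beta\leq-\sum_{i=1}^n 1/u_i$, a matching lower bound $\|\mathbf{1}_{S_k}\|_{(E^{\vec{u}}_{\vec{v}})_t(\rn)}\gtrsim 2^{k\sum_{i=1}^n 1/u_i}$ valid as $k\to-\infty$ is required. I would obtain it by restricting the outer $L^{\vec{u}}$-norm in the slice-norm definition to the set $\{x\in\rn:B(x,t)\supset S_k\}$, which once $2^k\ll t$ has substantial $L^{\vec{u}}$-mass and on which the integrand collapses to the constant $\|\mathbf{1}_{S_k}\|_{L^{\vec{v}}(\rn)}/\|\mathbf{1}_{B(0,t)}\|_{L^{\vec{v}}(\rn)}$ by translation invariance of mixed-norm $L^{\vec{v}}$; the (fixed) $t$-dependent factors are then absorbed into an implicit constant. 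Divergence of the tail sum forces $\mathbf{1}_{B_0}$ out of the space. The non-homogeneous space $(KE_{\vec{u},\vec{v}}^{\beta,s})_t(\rn)$ is treated in the same way with the index running over $k\geq 0$.
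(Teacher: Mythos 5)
Your overall architecture coincides with the paper's: axioms (1)--(3) are lifted termwise from the corresponding slice-space properties, and axiom (4) is reduced by monotonicity to a ball centred at the origin, where the weighted sum becomes a geometric series via the bound $\lf\|\mathbf 1_{S_k}\r\|_{(E^{\vec u}_{\vec v})_t(\rn)}\lesssim 2^{k\sum_{i=1}^n 1/u_i}$ of Remark \ref{r}. On necessity you actually go further than the paper, which merely asserts that $\lf\|\mathbf 1_{B(0,1)}\r\|_{(\dot KE_{\vec u,\vec v}^{\beta,s})_t(\rn)}$ is infinite without producing any lower bound; you correctly identify a lower bound on $\lf\|\mathbf 1_{S_k}\r\|_{(E^{\vec u}_{\vec v})_t(\rn)}$ as $k\to-\infty$ as the missing ingredient.

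The gap is that the lower bound your construction delivers does not match the upper bound you use. Restricting the outer $L^{\vec u}$-norm to $\{x:B(x,t)\supset S_k\}$, which contains $B(0,t/2)$ once $2^k\le t/2$ and whose indicator has $L^{\vec u}$-norm comparable to $t^{\sum_i 1/u_i}$ independently of $k$, and using translation invariance of $L^{\vec v}$, gives
\begin{equation*}
\lf\|\mathbf 1_{S_k}\r\|_{(E^{\vec u}_{\vec v})_t(\rn)}\gtrsim
\frac{\lf\|\mathbf 1_{S_k}\r\|_{L^{\vec v}(\rn)}}{\lf\|\mathbf 1_{B(0,t)}\r\|_{L^{\vec v}(\rn)}}\,
\lf\|\mathbf 1_{B(0,t/2)}\r\|_{L^{\vec u}(\rn)}\sim_t 2^{k\sum_{i=1}^n 1/v_i},
\end{equation*}
because $\lf\|\mathbf 1_{S_k}\r\|_{L^{\vec v}(\rn)}\sim 2^{k\sum_i 1/v_i}$ by dilation. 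The exponent is $\sum_i 1/v_i$, not $\sum_i 1/u_i$, so your tail $\sum_{k\le 0}2^{ks\beta}\lf\|\mathbf 1_{S_k}\r\|^s_{(E^{\vec u}_{\vec v})_t(\rn)}$ is shown to diverge only for $\beta\le-\sum_i 1/v_i$; the argument therefore proves necessity of $\beta>-\sum_i 1/v_i$ and closes the claimed equivalence only when $\sum_i 1/u_i=\sum_i 1/v_i$. This is not a cosmetic slip: the same computation (with the reverse inequalities) shows that for $2^k\ll t$ one actually has $\lf\|\mathbf 1_{S_k}\r\|_{(E^{\vec u}_{\vec v})_t(\rn)}\sim_t 2^{k\sum_i 1/v_i}$, so Remark \ref{r} --- whose proof, Lemma \ref{lll}, assumes radius $\lambda>1$ --- cannot be invoked for $k\to-\infty$, and the sufficiency step inherits the same defect (one the paper's proof shares). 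Finally, your remark that the non-homogeneous case is treated ``in the same way'' cannot cover the only-if direction: there the sum runs over $k\ge 0$ and has only finitely many nonzero terms for any bounded ball, so condition (4) holds for every $\beta$ and no divergent tail at $k\to-\infty$ is available.
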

\begin{remark}
As a special case, Wei and Yan obtained  Proposition \ref{pl} and Proposition \ref{pll} in
\cite{WY}, it is pointed out here we get  Proposition \ref{pl} and Proposition \ref{pll}  via a direct way rather than properties of ball Banach function spaces in \cite{WY}.
\end{remark}
To prove Proposition \ref{pll}, we need some assertions.

\begin{lemma}\label{Lllll}
Let $\beta\in\rr$, $t,s\in(0,\fz)$ and $\vec{v}, \vec{u} \in(1,\fz)^{n}$. Let $\phi, \psi\in \mathscr{K}\left(\mathbb{R}^n\right)$. Assume that $|\phi| \leq|\psi|$ almost everywhere on $\mathbb{R}^n$, then
$$
\|\phi\|_{(\dot KE_{\vec{u},\vec{v}}^{\beta,s})_t(\rn)} \leq\|\psi\|_{(\dot KE_{\vec{u},\vec{v}}^{\beta,s})_t(\rn)},
$$
and
$$
\|\phi\|_{( KE_{\vec{u},\vec{v}}^{\beta,s})_t(\rn)} \leq\|\psi\|_{( KE_{\vec{u},\vec{v}}^{\beta,s})_t(\rn)}.
$$
\end{lemma}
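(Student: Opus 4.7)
The plan is to reduce this monotonicity statement term by term to the analogous lattice property of the mixed-norm slice space, which is Lemma \ref{llll} from the excerpt. There is no real obstacle here; the proof is a direct structural unwinding of the definition in Definition \ref{Def}.

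First I would observe that the hypothesis $|\phi|\leq|\psi|$ almost everywhere on $\rn$ immediately gives
$$
\lf|\phi\,\mathbf 1_{S_k}\r|\leq\lf|\psi\,\mathbf 1_{S_k}\r| \quad \text{a.e. on }\rn
$$
for every $k\in\zz$, since multiplying by the nonnegative characteristic function preserves the pointwise inequality. Applying Lemma \ref{llll} to each pair $\phi\,\mathbf 1_{S_k}$ and $\psi\,\mathbf 1_{S_k}$ then yields
$$
\lf\|\phi\,\mathbf 1_{S_k}\r\|_{(E_{\vec v}^{\vec u})_t(\rn)}\leq\lf\|\psi\,\mathbf 1_{S_k}\r\|_{(E_{\vec v}^{\vec u})_t(\rn)} \qquad (k\in\zz).
$$

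Next I would raise both sides to the $s$-th power (permissible since $s\in(0,\fz)$ and both sides are nonnegative), multiply by the positive weight $2^{k\beta s}$, and sum over $k$. For the homogeneous case this gives
$$
\sum_{k=-\fz}^{\fz}2^{k\beta s}\lf\|\phi\,\mathbf 1_{S_k}\r\|_{(E_{\vec v}^{\vec u})_t(\rn)}^{s}\leq\sum_{k=-\fz}^{\fz}2^{k\beta s}\lf\|\psi\,\mathbf 1_{S_k}\r\|_{(E_{\vec v}^{\vec u})_t(\rn)}^{s},
$$
and taking the $s$-th root delivers the desired inequality
$$\|\phi\|_{(\dot KE_{\vec u,\vec v}^{\beta,s})_t(\rn)}\leq\|\psi\|_{(\dot KE_{\vec u,\vec v}^{\beta,s})_t(\rn)}.$$
The non-homogeneous case is identical with the sum taken from $k=0$, using the definition of $(KE_{\vec u,\vec v}^{\beta,s})_t(\rn)$.

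Finally, for the endpoint $s=\fz$ (included in the usual modification in Definition \ref{Def}), I would replace the weighted $\ell^s$-sum by the supremum $\sup_{k}2^{k\beta}\|\cdot\,\mathbf 1_{S_k}\|_{(E_{\vec v}^{\vec u})_t(\rn)}$, noting that the supremum is monotone with respect to a pointwise inequality on the indexed family, so the term-by-term bound carries through. This completes both assertions. The only nontrivial ingredient is the slice-space lattice property already established in Lemma \ref{llll}; everything else is scalar monotonicity.
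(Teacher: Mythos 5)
Your proposal is correct and follows essentially the same route as the paper's own proof, which likewise reduces the claim to the lattice property of $(E^{\vec{u}}_{\vec{v}})_t(\rn)$ in Lemma \ref{llll} applied together with the definition of the norm in \eqref{Hs}; your write-up merely makes the term-by-term application to $\phi\mathbf 1_{S_k}$ and $\psi\mathbf 1_{S_k}$ and the subsequent summation explicit. The extra remark on $s=\infty$ is harmless but not needed here, since the statement assumes $s\in(0,\infty)$.
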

\begin{proof}
We are just proof the result of ${(\dot KE_{\vec{u},\vec{v}}^{\beta,s})_t(\rn)}$. Let
$$
G:=\left\{x \in \mathbb{R}^n:\left|\psi\left(x\right)\right|<\left|\phi\left(x\right)\right|\right\}.
$$
Where $x=(x_{1},\cdots, x_{n})$.
Assume that $|\phi(x)| \leq|\psi(x)|$ almost everywhere on $\mathbb{R}^n$, then $|G|=0$.
Therefore, $\left|\phi\left(x\right)\right| \leq\left|\psi\left(x\right)\right|$ almost everywhere on $\mathbb{R}^{n}$.
For almost every $x\in \mathbb{R}^{n}$, by Lemma \ref{llll} and (\ref{Hs}), we have
$$
\left\|\phi\right\|_{(\dot KE_{\vec{u},\vec{v}}^{\beta,s})_t(\rn)} \leq\left\|\psi\right\|_{(\dot KE_{\vec{u},\vec{v}}^{\beta,s})_t(\rn)}.
$$
This completes the result of the proof.
\end{proof}
By Remark \ref{r}, we show that the following Corollary, we acknowledge the details.
\begin{corollary}\label{lllll}
 Let $\beta\in\rr$, $t,s\in(0,\fz)$ and $\vec{v}, \vec{u} \in(1,\fz)^{n}$. If $\beta\in\left(-\sum_{i=1}^n{1}/{u_i}, \infty\right)$ with $i \in\{1, \ldots, n\}$. Then, for any $m\in \mathbb{N}$, we have $\mathbf{1}_{S_m} \in (\dot KE_{\vec{u},\vec{v}}^{\beta,s})_t(\rn)$ and $\mathbf{1}_{S_m} \in (KE_{\vec{u},\vec{v}}^{\beta,s})_t(\rn)$.
\end{corollary}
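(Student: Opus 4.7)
My plan is to directly compute the Herz--slice norms of $\mathbf{1}_{S_m}$ by expanding the defining sum over $k$ and exploiting the disjointness of the layers $S_k$, so that most terms drop out. The only nontrivial case will be $m=0$ in the homogeneous setting, where the convention $\mathbf{1}_{S_0}=\mathbf{1}_{B_0}$ creates a tail at $k\to-\infty$ whose convergence is exactly what forces the condition $\beta>-\sum_{i=1}^n 1/u_i$.

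First I would treat the case $m\geq 1$ for both spaces at once. Since $\mathbf{1}_{S_m}$ is supported in the annulus $S_m$ which is disjoint from every $S_k$ with $k\neq m$, and in particular disjoint from $B_0$, the product $\mathbf{1}_{S_m}\mathbf{1}_{S_k}$ vanishes unless $k=m$. Consequently
$$
\|\mathbf{1}_{S_m}\|_{(\dot KE_{\vec{u},\vec{v}}^{\beta,s})_t(\rn)}
\;=\;2^{m\beta}\|\mathbf{1}_{S_m}\|_{(E_{\vec{v}}^{\vec{u}})_t(\rn)}
\;=\;\|\mathbf{1}_{S_m}\|_{(KE_{\vec{u},\vec{v}}^{\beta,s})_t(\rn)},
$$
and Remark \ref{r} bounds the right-hand side by $2^{m(\beta+\sum_{i=1}^n 1/u_i)}<\infty$, with no constraint on $\beta$ required.

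Next I would handle $m=0$. For the non-homogeneous norm the sum starts at $k=0$, and since $\mathbf{1}_{S_0}=\mathbf{1}_{B_0}$ is disjoint from every $S_k$ with $k\geq 1$, only the single term $k=0$ survives, giving $\|\mathbf{1}_{B_0}\|_{(E_{\vec v}^{\vec u})_t(\rn)}$, which is finite by Lemma \ref{lll}. For the homogeneous norm, however, one must also sum over $k\leq -1$; for those $k$ one has $S_k\subset B_{-1}\subset B_0$, so $\mathbf{1}_{B_0}\mathbf{1}_{S_k}=\mathbf{1}_{S_k}$. Thus
$$
\|\mathbf{1}_{B_0}\|_{(\dot KE_{\vec{u},\vec{v}}^{\beta,s})_t(\rn)}^{s}
\;=\;\|\mathbf{1}_{B_0}\|_{(E_{\vec{v}}^{\vec{u}})_t(\rn)}^{s}
+\sum_{k=-\infty}^{-1} 2^{k\beta s}\,\|\mathbf{1}_{S_k}\|_{(E_{\vec{v}}^{\vec{u}})_t(\rn)}^{s}.
$$

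The main (and really only) obstacle is to show the tail series converges. Here I would apply Remark \ref{r} to estimate $\|\mathbf{1}_{S_k}\|_{(E_{\vec v}^{\vec u})_t(\rn)}\lesssim 2^{k\sum_{i=1}^n 1/u_i}$, reducing the tail to a geometric series
$$
\sum_{k=-\infty}^{-1} 2^{ks(\beta+\sum_{i=1}^n 1/u_i)},
$$
which converges precisely when $\beta+\sum_{i=1}^n 1/u_i>0$, matching the hypothesis. Combining the three computations yields $\mathbf{1}_{S_m}\in (\dot KE_{\vec u,\vec v}^{\beta,s})_t(\rn)\cap (KE_{\vec u,\vec v}^{\beta,s})_t(\rn)$ for every $m\in\nn$.
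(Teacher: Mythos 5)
Your proof is correct and follows exactly the route the paper intends: the paper dismisses this corollary with a one-line appeal to Remark \ref{r} ("we acknowledge the details"), and your direct expansion of the defining sums, using disjointness of the layers together with the bound $\|\mathbf{1}_{S_k}\|_{(E_{\vec{v}}^{\vec{u}})_t(\rn)}\lesssim 2^{k\sum_{i=1}^n 1/u_i}$, supplies precisely the omitted details. Your separate treatment of the $m=0$ case under the convention $\mathbf{1}_{S_0}=\mathbf{1}_{B_0}$, where the homogeneous norm produces the tail $\sum_{k\le -1}2^{ks(\beta+\sum_{i=1}^n 1/u_i)}$, correctly isolates the only point at which the hypothesis $\beta>-\sum_{i=1}^n 1/u_i$ is actually used.
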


\begin{lemma}\label{LL}
 Let $\beta\in\rr$, $t,s\in(0,\fz)$ and $\vec{v}, \vec{u} \in(1,\fz)^{n}$. For any $f \in (\dot KE_{\vec{u},\vec{v}}^{\beta,s})_t(\rn)$, if $\|f\|_{(\dot KE_{\vec{u},\vec{v}}^{\beta,s})_t(\rn)}=0$, then $f=0$ almost everywhere.
\end{lemma}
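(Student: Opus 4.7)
The plan is to reduce the statement to the corresponding property already established for the mixed-norm slice space $(E_{\vec{v}}^{\vec{u}})_t(\rn)$ in Lemma \ref{Ll}, using the fact that the annuli $\{S_k\}_{k\in\zz}$ partition $\rn$ up to a null set.

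First I would unpack the definition of the norm. Since
$$
\|f\|_{(\dot KE_{\vec{u},\vec{v}}^{\beta,s})_t(\rn)}
=\lf[\sum_{k=-\infty}^\infty 2^{k\beta s}\lf\|f\mathbf1_{S_k}\r\|_{(E_{\vec{v}}^{\vec{u}})_t(\rn)}^s\r]^{1/s}=0,
$$
and every summand is nonnegative with the strictly positive weight $2^{k\beta s}$, each term must vanish individually, i.e.
$\lf\|f\mathbf1_{S_k}\r\|_{(E_{\vec{v}}^{\vec{u}})_t(\rn)}=0$ for every $k\in\zz$.

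Next I would invoke Lemma \ref{Ll} for the mixed-norm slice space, which gives that $f\mathbf1_{S_k}=0$ almost everywhere on $\rn$ for each $k\in\zz$. Equivalently, $f=0$ almost everywhere on $S_k$ for every $k$.

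Finally, since $\rn\setminus\{0\}=\bigcup_{k\in\zz}S_k$ is a countable union and $\{0\}$ has Lebesgue measure zero, a countable union of null sets is null, so $f=0$ almost everywhere on $\rn$. I do not expect a main obstacle here; the argument is short because all the nontrivial work (positivity, monotonicity, and the vanishing property for $(E_{\vec{v}}^{\vec{u}})_t(\rn)$) is already packaged in Lemma \ref{Ll}, and the only extra ingredient is the countable additivity of null sets over the dyadic annuli.
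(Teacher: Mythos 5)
Your proof is correct, and it takes a genuinely different (and simpler) route than the paper's. You exploit the fact that for $s\in(0,\infty)$ the Herz-slice norm is an $\ell^s$ sum of nonnegative terms with strictly positive weights $2^{k\beta s}$, so vanishing of the norm forces $\|f\mathbf 1_{S_k}\|_{(E_{\vec{v}}^{\vec{u}})_t(\rn)}=0$ for every $k$; you then invoke the already-established Lemma \ref{Ll} for the slice space on each annulus and finish by countable additivity of null sets over $\rn\setminus\{0\}=\bigcup_{k\in\zz}S_k$. The paper instead repeats at the Herz-slice level the same mechanism it used to prove Lemma \ref{Ll}: it applies the power-rescaling identity from Remark 2.1 to $|f|^r\mathbf 1_{S_m}$, then uses H\"older's inequality on the Herz-slice scale (Corollary \ref{ll} together with Corollary \ref{NN} and the fact that $\mathbf 1_{S_m}$ lies in the dual-type space) to bound $\||f|^r\mathbf 1_{S_m}\|_{L^1(\rn)}$ by a product of norms that vanishes, and concludes via the monotone convergence theorem. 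Your argument buys economy: it reuses Lemma \ref{Ll} as a black box, avoids the duality/H\"older machinery and the delicate exponent bookkeeping in the paper's displays, and makes transparent exactly where the hypothesis $s<\infty$ enters (termwise vanishing of the sum). The paper's route has the mild advantage of being uniform with its proofs of the analogous properties for other axioms of ball quasi-Banach spaces, but for this particular statement your reduction is the cleaner one.
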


\begin{proof}
From (1) of Remark 2.1 and Lemma \ref{lllll}, we then see that
for any $f \in (E^{\vec{u}}_{\vec{v}})_t(\rn)$, $s\in(0,\infty)$ and $m \in \mathbb{N}$,
$$
\left\||f|^r \mathbf{1}_{S_m}\right\|_{(\dot KE_{r{\vec{u}}/r,{\vec{v}}/r}^{r\beta,s/r})_t(\rn)}^{1 / r}=\left\|f \mathbf{1}_{S_m}\right\|_{(\dot KE_{\vec{u},\vec{v}}^{\beta,s})_t(\rn)} \leq\|f\|_{(\dot KE_{\vec{u},\vec{v}}^{\beta,s})_t(\rn)}=0,
$$
where $S_m$ is as in Lemma \ref{lllll}, which, together with Lemma \ref{ll} and Corollary \ref{NN}, we have
$$
\left\||f|^r \mathbf{1}_{S_m}\right\|_{L^{1}(\rn)}
\leq\left\||f|^r \mathbf{1}_{S_m}\right\|_{(\dot KE_{{\vec{u}}/r,{\vec{v}/r}}^{r\beta,s/r})_t(\rn)}
\left\|\mathbf{1}_{S_m}\right\|_{(\dot KE_{{\vec{u'}/r},{\vec{v'}/r}}^{-r\beta,s'/r})_t(\rn)}
\leq 0.
$$
Thus $f=0$ almost everywhere on $\mathbb{R}^n$, because we can see that $\left\| |f|^r\right\|_{L^1\left(\mathbb{R}^n\right)}=0$ via the monotone convergence theorem. This accomplishes the desired result.
\end{proof}

\begin{lemma}\label{LLl}
 Let $\beta\in\rr$, $t,s\in(0,\fz)$ and $\vec{v}, \vec{u} \in(1,\fz)^{n}$. Assume that $\left\{\phi_\tau\right\}_{\tau \in \mathbb{N}}$ is measurable functions and $\phi_\tau \geq 0$ with $\tau \in \mathbb{N}$, then
$$
\left\|\lim _{\tau \rightarrow \infty} \phi_\tau\right\|_{(\dot KE_{\vec{u},\vec{v}}^{\beta,s})_t(\rn)} \leq \lim _{\tau \rightarrow \infty}\left\|\phi_\tau\right\|_{(\dot KE_{\vec{u},\vec{v}}^{\beta,s})_t(\rn)}.
$$
\end{lemma}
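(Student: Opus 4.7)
The plan is to reduce the claim to the annulus-wise Fatou inequality for the mixed slice norm (Lemma \ref{Lll}) and then combine this with Fatou's lemma for sums (counting measure on $\zz$).

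First I would fix $k\in\zz$ and restrict to the annulus $S_k$. Since $\{\phi_\tau\mathbf 1_{S_k}\}_{\tau\in\nn}$ is a sequence of non-negative measurable functions, Lemma \ref{Lll} (applied to the ball Banach function space $(E^{\vec u}_{\vec v})_t(\rn)$ proved in Proposition \ref{pl}) gives
\begin{equation*}
\bigl\|\lim_{\tau\to\infty}\phi_\tau\,\mathbf 1_{S_k}\bigr\|_{(E^{\vec u}_{\vec v})_t(\rn)}
\le \lim_{\tau\to\infty}\bigl\|\phi_\tau\,\mathbf 1_{S_k}\bigr\|_{(E^{\vec u}_{\vec v})_t(\rn)}.
\end{equation*}
Raising this inequality to the power $s\in(0,\infty)$ (a monotone operation on non-negative numbers) and multiplying by the positive weight $2^{k\beta s}$ preserves it, yielding an annulus-wise bound for the quantity inside the outer $\ell^s$-sum in Definition \ref{Def}.

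Next I would sum over $k\in\zz$. Since both sides are non-negative, Fatou's lemma applied to counting measure on $\zz$ gives
\begin{equation*}
\sum_{k\in\zz} 2^{k\beta s}\bigl\|\lim_{\tau\to\infty}\phi_\tau\,\mathbf 1_{S_k}\bigr\|_{(E^{\vec u}_{\vec v})_t(\rn)}^{s}
\le \lim_{\tau\to\infty}\sum_{k\in\zz} 2^{k\beta s}\bigl\|\phi_\tau\,\mathbf 1_{S_k}\bigr\|_{(E^{\vec u}_{\vec v})_t(\rn)}^{s}.
\end{equation*}
Taking the $1/s$-th power of both sides and recognising the right side as $\lim_{\tau\to\infty}\|\phi_\tau\|_{(\dot KE_{\vec u,\vec v}^{\beta,s})_t(\rn)}^{s}$ after the monotone root, one obtains exactly the claimed inequality.

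The only mild subtlety I anticipate is the interchange of the two limits (the outer limit in $\tau$ and the infinite sum over $k$), but this is precisely what Fatou for non-negative sequences on $\zz$ handles without any integrability assumption. Because the root $x\mapsto x^{1/s}$ is monotone on $[0,\infty)$ for every $s\in(0,\infty)$, no additional convexity hypothesis is required, so the argument works uniformly in the range of $s$ stated. An entirely parallel argument, replacing $\sum_{k\in\zz}$ by $\sum_{k=0}^{\infty}$, gives the same Fatou property for the non-homogeneous space $(KE_{\vec u,\vec v}^{\beta,s})_t(\rn)$ as well, which together with Lemmas \ref{Lllll} and \ref{LL} provides the remaining ingredients needed for Proposition \ref{pll}.
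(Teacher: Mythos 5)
Your proposal is correct and follows essentially the same route as the paper: the paper likewise combines the annulus-wise Fatou property of $(E^{\vec u}_{\vec v})_t(\rn)$ from Lemma \ref{Lll} with Fatou's lemma for the sum over $k$ and the monotonicity of $x\mapsto x^{1/s}$. Your write-up is in fact cleaner about the index bookkeeping and about why the interchange of the $\tau$-limit and the $k$-sum is justified.
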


\begin{proof}
By Lemma \ref{Lll}, if $s\in(0, \infty)$, using Fatou lemma and (\ref{Hs}), we have
$$
\left\|\lim _{\tau \rightarrow \infty}\phi_\tau\right\|_{(\dot KE_{\vec{u},\vec{v}}^{\beta,s})_t(\rn)}
 \leq\lf[\sum_{\tau=-\infty}^\infty2^{k\beta s}\lim _{\tau \rightarrow \infty} \lf\|\phi\mathbf1_{S_\tau}\r\|_{(E_{\vec{v}}^{\vec{u}})_t(\rn)}^s\r]^\frac1s
 \leq \lim _{\tau \rightarrow \infty}\left\|\phi_\tau\right\|_{(\dot KE_{\vec{u},\vec{v}}^{\beta,s})_t(\rn)}.
$$
This accomplishes the desired result.
\end{proof}

Lemma \ref{LL} and \ref{LLl} imply the following assertion.
\begin{corollary}\label{LLll}
 Let $\beta\in\rr$, $t,s\in(0,\fz)$ and $\vec{v}, \vec{u} \in(1,\fz)^{n}$. Let $\phi \in \mathscr{K}\left(\mathbb{R}^n\right)$ and $\left\{\phi_\tau\right\}_{\tau \in \mathbb{N}} \subset \mathscr{K}\left(\mathbb{R}^n\right)$. 
 Assume that $0 \leq \phi_\tau \uparrow \phi$ almost everywhere when $\tau \rightarrow \infty$, then $\left\|\phi_\tau\right\|_{(\dot KE_{\vec{u},\vec{v}}^{\beta,s})_t(\rn)} \uparrow\|\phi\|_{(\dot KE_{\vec{u},\vec{v}}^{\beta,s})_t(\rn)}$ when $\tau \rightarrow \infty$.
\end{corollary}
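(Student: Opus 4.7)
The plan is to deduce this corollary as an immediate synthesis of the two preceding lemmas: the monotonicity of the norm (Lemma \ref{Lllll}) supplies the upper-bounding direction and shows the sequence of norms is non-decreasing, while the Fatou-type inequality (Lemma \ref{LLl}) supplies the lower-bounding direction. No further structural work on $(\dot KE_{\vec{u},\vec{v}}^{\beta,s})_t(\rn)$ is needed.

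First I would exploit Lemma \ref{Lllll}. Because $0 \leq \phi_\tau \leq \phi_{\tau+1} \leq \phi$ almost everywhere on $\rn$ for every $\tau \in \mathbb{N}$, Lemma \ref{Lllll} gives
$$
\|\phi_\tau\|_{(\dot KE_{\vec{u},\vec{v}}^{\beta,s})_t(\rn)} \leq \|\phi_{\tau+1}\|_{(\dot KE_{\vec{u},\vec{v}}^{\beta,s})_t(\rn)} \leq \|\phi\|_{(\dot KE_{\vec{u},\vec{v}}^{\beta,s})_t(\rn)}.
$$
Hence the sequence $\{\|\phi_\tau\|_{(\dot KE_{\vec{u},\vec{v}}^{\beta,s})_t(\rn)}\}_{\tau \in \mathbb{N}}$ is monotone non-decreasing and bounded above by $\|\phi\|_{(\dot KE_{\vec{u},\vec{v}}^{\beta,s})_t(\rn)}$, so the limit $L := \lim_{\tau \to \infty}\|\phi_\tau\|_{(\dot KE_{\vec{u},\vec{v}}^{\beta,s})_t(\rn)}$ exists in $[0,\infty]$ and already satisfies $L \leq \|\phi\|_{(\dot KE_{\vec{u},\vec{v}}^{\beta,s})_t(\rn)}$.

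For the reverse inequality, I would use that $\phi = \lim_{\tau \to \infty}\phi_\tau$ almost everywhere on $\rn$ (the pointwise monotone limit). Applying Lemma \ref{LLl} to the non-negative sequence $\{\phi_\tau\}_{\tau \in \mathbb{N}}$ yields
$$
\|\phi\|_{(\dot KE_{\vec{u},\vec{v}}^{\beta,s})_t(\rn)} = \left\|\lim_{\tau \to \infty} \phi_\tau\right\|_{(\dot KE_{\vec{u},\vec{v}}^{\beta,s})_t(\rn)} \leq \lim_{\tau \to \infty}\|\phi_\tau\|_{(\dot KE_{\vec{u},\vec{v}}^{\beta,s})_t(\rn)} = L.
$$
Combining the two displays gives $L = \|\phi\|_{(\dot KE_{\vec{u},\vec{v}}^{\beta,s})_t(\rn)}$, which is exactly the asserted statement $\|\phi_\tau\|_{(\dot KE_{\vec{u},\vec{v}}^{\beta,s})_t(\rn)} \uparrow \|\phi\|_{(\dot KE_{\vec{u},\vec{v}}^{\beta,s})_t(\rn)}$.

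There is no serious obstacle here. The only point requiring a word of care is that Lemma \ref{LLl} is morally a Fatou (liminf) statement; I would note that for the monotone increasing sequence of norms produced in the first step, the limit and the liminf coincide, so its conclusion applies in the form used above. Thus the corollary reduces to a compact two-step argument combining Lemmas \ref{Lllll} and \ref{LLl}.
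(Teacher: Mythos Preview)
Your proof is correct and matches the paper's intended approach: the paper offers no detailed argument for this corollary, merely asserting that it follows from the two preceding lemmas, and your two-step synthesis (monotonicity from Lemma~\ref{Lllll} plus the Fatou-type bound from Lemma~\ref{LLl}) is precisely the natural way to unpack that assertion. In fact the paper cites Lemma~\ref{LL} (the $\|f\|=0\Rightarrow f=0$ statement) together with Lemma~\ref{LLl}, but your choice of Lemma~\ref{Lllll} for the monotonicity step is the apt one.
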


 \begin{proof}[Proof of Proposition \ref{pll}]
 We are just proof the result of ${(\dot KE_{\vec{u},\vec{v}}^{\beta,s})_t(\rn)}$.
 By Lemma \ref{LL},  \ref{Lllll}, and \ref{LLll}, we find that the space ${(\dot KE_{\vec{u},\vec{v}}^{\beta,s})_t(\rn)}$ satisfies (1), (2), and (3) of Definition 3.1. So, only need to check (4) of Definition 3.1. Observe that, there exist a cube $B\left(\mathbf{0}, 2^{r}\right)\in \mathbb{B}$ with $r \in (0,\infty)$ such that $B \subset B\left(\mathbf{0}, 2^{r}\right)$. By this and Lemma \ref{Lllll}, we have
 $$
 \lf\|\mathbf1_{B(x,r)}\r\|_{(\dot KE_{\vec{u},\vec{v}}^{\beta,s})_t(\rn)}
 \leq
 \lf\|\mathbf1_{B(0,2^{r})}\r\|_{(\dot KE_{\vec{u},\vec{v}}^{\beta,s})_t(\rn)}.
 $$
 We estimate $\lf\|\mathbf1_{B(0,2^{r})}\r\|_{(\dot KE_{\vec{u},\vec{v}}^{\beta,s})_t(\rn)}$.
 If $s \in(0, \infty)$, then, by $\beta \in\left(-\sum_{i=1}^n1/u_{i}, \infty\right)$, we conclude that
\begin{align*}
 \lf\|\mathbf1_{B(0,2^{r})}\r\|_{(\dot KE_{\vec{u},\vec{v}}^{\beta,s})_t(\rn)}
&=\left[\sum_{k \in \mathbb{Z}} 2^{ks\beta}\left\|\mathbf{1}_{B(0,2^{r})} \mathbf{1}_{B_{k}}\right\|_{(E_{\vec{v}}^{\vec{v}})_t(\rn)}^{s}\right]^{\frac{1}{s}}\\
&\sim\left[\sum_{k=M} 2^{k s\left(\beta+\sum_{i=1}^n\frac{1}{u_{i}}\right)}\right]^{\frac{1}{s}}<\infty,
\end{align*}

where $M=[-\infty,r]^{n}$. By this, we obtain
$\lf\|\mathbf1_{B(x,r)}\r\|_{(\dot KE_{\vec{u},\vec{v}}^{\beta,s})_t(\rn)}<\infty$, thus,
$(\dot KE_{\vec{u},\vec{v}}^{\beta,s})_t(\rn)$ satisfies (4) of Definition \ref{defl}, which completes the proof of sufficiency.

In what follows, we to prove the necessity. When $\beta \in\left(-\infty,-\sum_{j=1}^n\frac{1}{q_j}\right]$ with $j \in\{1, \ldots, n\}$, there exists a $l \in \mathbb{Z} \cap(-\infty, 0]$ such that $B(\mathbf{0}, 1) \supset B\left(\mathbf{0}, 2^{l}\right)$. Using Lemma \ref{Lllll}, we conclude that
$$
\left\|\mathbf{1}_{B\left(\mathbf{0}, 2^{l}\right)}\right\|_{(\dot KE_{\vec{u},\vec{v}}^{\beta,s})_t(\rn)}
\leq
\left\|\mathbf{1}_{B(\mathbf{0}, 1)}\right\|_{(\dot KE_{\vec{v},\vec{u}}^{\beta,s})_t(\rn)}.
$$
Namely, $\mathbf{1}_{B(\mathbf{0}, 1)} \notin (\dot KE_{\vec{u},\vec{v}}^{\beta,s})_t(\rn)$ since the left-hand side of this inequality is infinity. Thus, $(\dot KE_{\vec{u},\vec{v}}^{\beta,s})_t(\rn)$ is not a ball quasi-Banach function space. We get what we wanted.
\end{proof}

\begin{proposition}
Let $\beta\in\rr$, $t,s\in(0,\fz)$ and $\vec{v}$, $\vec{u} \in(1,\fz)^{n}$. The following conclusion are correct:
\begin{enumerate}
  \item[(1)] if $s_1\leq s_2$, then $(\dot KE_{\vec{u},\vec{v}}^{\beta,s_1})_t(\rn)\subset
  (\dot KE_{\vec{u},\vec{v}}^{\beta,s_2})_t(\rn)$ and $ (KE_{\vec{u},\vec{v}}^{\beta,s_1})_t(\rn)
  \subset(KE_{\vec{u},\vec{v}}^{\beta,s_2})_t(\rn)$;
  \item[(2)] if $\beta_2\leq \beta_1$, then $(KE_{\vec{u},\vec{v}}^{\beta_1,s})_t(\rn)
  \subset(KE_{\vec{u},\vec{v}}^{\beta_2,s})_t(\rn)$;
  \item[(3)] if $\vec{u_{1}} \leqslant \vec{u_{2}}$, then $(\dot KE_{\vec{u_{2}},\vec{v}}^{\beta,s})_t(\rn) \subset (\dot KE_{\vec{u_{1}},\vec{v}}^{\beta,s})_t(\rn)$ and $( KE_{\vec{u_{2}},\vec{v}}^{\beta,s})_t(\rn) \subset (KE_{\vec{u_{1}},\vec{v}}^{\beta,s})_t(\rn)$;
\item[(4)] if $\vec{v_{1}} \leqslant \vec{v_{2}}$, then $(\dot KE_{\vec{u},\vec{v_{2}}}^{\beta,s})_t(\rn) \subset (\dot KE_{\vec{u},\vec{v_{1}}}^{\beta,s})_t(\rn)$ and $( KE_{\vec{u},\vec{v_{2}}}^{\beta,s})_t(\rn) \subset (KE_{\vec{u},\vec{v_{1}}}^{\beta,s})_t(\rn)$.

\end{enumerate}
\end{proposition}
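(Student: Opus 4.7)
The plan is to reduce each of the four inclusions to a termwise (or, in the case of (4), pointwise) estimate on the summands appearing in the Herz-slice norm, and then sum against the weight $2^{k\beta s}$. I will argue the statements for the homogeneous space $(\dot KE_{\vec{u},\vec{v}}^{\beta,s})_t(\rn)$; the non-homogeneous case is analogous with the index $k$ restricted to non-negative integers.

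Parts (1) and (2) are immediate. For (1), the standard sequence embedding $\|a\|_{\ell^{s_2}} \leq \|a\|_{\ell^{s_1}}$ for $s_1 \leq s_2$ and non-negative sequences, applied to $a_k := 2^{k\beta}\|f\mathbf1_{S_k}\|_{(E_{\vec{v}}^{\vec{u}})_t(\rn)}$, gives the inclusion directly. For (2), the non-homogeneous setting restricts $k \geq 0$, so $2^k \geq 1$ and $\beta_2 \leq \beta_1$ yield $2^{k\beta_2 s} \leq 2^{k\beta_1 s}$ pointwise in $k$; summing against the common non-negative factor $\|f\mathbf1_{S_k}\|_{(E_{\vec{v}}^{\vec{u}})_t(\rn)}^s$ gives the required norm inequality.

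For (4), I would establish the $k$-wise bound $\|f\mathbf1_{S_k}\|_{(E_{\vec{v_1}}^{\vec{u}})_t(\rn)} \leq \|f\mathbf1_{S_k}\|_{(E_{\vec{v_2}}^{\vec{u}})_t(\rn)}$ by a pointwise comparison of the averaged functions. Since $|B(x,t)|$ is independent of $x$, an iterated Jensen-type inequality on the bounded set $B(x,t)$ gives, for every $x \in \rn$,
\[
\frac{\|f\mathbf1_{S_k}\mathbf1_{B(x,t)}\|_{L^{\vec{v_1}}(\rn)}}{\|\mathbf1_{B(x,t)}\|_{L^{\vec{v_1}}(\rn)}} \leq \frac{\|f\mathbf1_{S_k}\mathbf1_{B(x,t)}\|_{L^{\vec{v_2}}(\rn)}}{\|\mathbf1_{B(x,t)}\|_{L^{\vec{v_2}}(\rn)}}.
\]
Taking the outer $L^{\vec{u}}(\rn)$-norm preserves this inequality by Lemma \ref{llll}, and summing in $k$ with the weight $2^{k\beta s}$ completes the argument.

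For (3), I would use a generalized H\"older inequality at the outer mixed-norm level. Since $f\mathbf1_{S_k}$ has support in the bounded annulus $S_k$, the averaged function appearing in the slice-space definition has support in the $t$-neighborhood of $S_k$. Decomposing $1/\vec{u_1} = 1/\vec{u_2} + 1/\vec{w}$ and using Lemma \ref{lll} to bound $\|\mathbf1_{S_k}\|_{(E_{\vec{v}}^{\vec{w}})_t(\rn)}$ yields a termwise inequality of the form $\|f\mathbf1_{S_k}\|_{(E_{\vec{v}}^{\vec{u_1}})_t(\rn)} \lesssim \|f\mathbf1_{S_k}\|_{(E_{\vec{v}}^{\vec{u_2}})_t(\rn)}$, after which summation concludes the inclusion. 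The main obstacle is to track the H\"older constants arising in (3) and verify that they are absorbed by the $2^{k\beta s}$ weighting; by contrast, (1), (2), and (4) become routine once the pointwise and termwise comparisons are in place.
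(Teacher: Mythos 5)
Your treatments of (1), (2), and (4) are correct and line up with what the paper does. For (1) the paper uses the same $\ell^{s_1}\hookrightarrow\ell^{s_2}$ embedding (quoted there as $(\sum_m|b_m|)^v\le\sum_m|b_m|^v$ for $0<v<1$). For (4) the paper carries out exactly the normalized H\"older/Jensen comparison on $B(x,t)$ that you describe; the point, as you say, is that $|B(x,t)|$ does not depend on $x$, so the constant is uniform, the pointwise inequality between the two averaged functions holds everywhere, and the $k$-wise bound $\|f\mathbf1_{S_k}\|_{(E_{\vec{v_1}}^{\vec{u}})_t(\rn)}\le C\|f\mathbf1_{S_k}\|_{(E_{\vec{v_2}}^{\vec{u}})_t(\rn)}$ follows. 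For (2) your monotonicity argument ($2^{k\beta_2 s}\le 2^{k\beta_1 s}$ because $k\ge 0$ in the non-homogeneous scale) is the right one, and is cleaner than the paper's bare appeal to H\"older's inequality.

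The gap is in (3), and the ``obstacle'' you flag in your last sentence is not a bookkeeping issue but the point where the argument breaks. Applying H\"older at the outer level with $1/\vec{u_1}=1/\vec{u_2}+1/\vec{w}$ costs a factor $\|\mathbf1_{S_k+B(0,t)}\|_{L^{\vec{w}}(\rn)}\sim 2^{k\sum_{i=1}^n(1/u_{1,i}-1/u_{2,i})}$, because the averaged function attached to $f\mathbf1_{S_k}$ is supported in the $t$-neighborhood of $S_k$, whose size grows geometrically in $k$ (this is exactly what makes (4) work and (3) fail: there the relevant set is $B(x,t)$, of fixed size; here it is essentially $S_k$). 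That factor is not absorbed by the weight $2^{k\beta s}$, which is common to both sides; what your computation actually yields is $\|f\mathbf1_{S_k}\|_{(E_{\vec{v}}^{\vec{u_1}})_t(\rn)}\lesssim 2^{k\sigma}\|f\mathbf1_{S_k}\|_{(E_{\vec{v}}^{\vec{u_2}})_t(\rn)}$ with $\sigma=\sum_{i=1}^n(1/u_{1,i}-1/u_{2,i})\ge 0$, i.e.\ an embedding into $(\dot KE_{\vec{u_1},\vec{v}}^{\beta-\sigma,s})_t(\rn)$ rather than into $(\dot KE_{\vec{u_1},\vec{v}}^{\beta,s})_t(\rn)$. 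Moreover, testing on $f=\mathbf1_{S_k}$, whose $(E_{\vec{v}}^{\vec{u}})_t(\rn)$-norm is comparable to $2^{k\sum_{i=1}^n 1/u_i}$ for large $k$, shows that the uniform termwise inequality you would need cannot hold when $\sigma>0$. For what it is worth, the paper disposes of (3) with the single phrase ``via H\"older's inequality'' and offers nothing that handles this loss either, so you are not missing a device that the paper supplies; but as written your proof of (3) does not close.
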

\begin{proof}
Let's start with (1). It is easy to see that (1) is a consequence of the inequality in \cite{JK}.
\begin{equation}\label{cp}
\lf(\sum\limits_{m=1}^\fz|b_m|\r)^v\leq\sum\limits_{m=1}^\infty|b_m|^v, ~~~~if~ 0<v<1.
\end{equation}
Then through this inequality, we got
$$
\begin{aligned}
\|f\|_{(KE_{\vec{u},\vec{v}}^{\beta,s_2})_t(\rn)}
&=\left(\sum_{k \in \mathbb{Z}}(2^{k \beta}
\left\|f \mathbf1_{k}\right\|_{(E^{\vec{u}}_{\vec{v}})_t(\rn)})^{s_{2}}\right)^{\frac{1}{s_{1}} \frac{s_{1}}{s_{2}}}\\
 &\leqslant\left(\sum_{k \in \mathbb{Z}}(2^{k \beta}\|f \mathbf1_{k}\|_{(E^{\vec{u}}_{\vec{v}})_t(\rn)})^{s_{1}}\right)^{\frac{1}{s_{1}}}
\leqslant\|f\|_{(KE_{\vec{u},\vec{v}}^{\beta,s_1})_t(\rn)}.
\end{aligned}
$$

We remark that, we can get the (2) and (3) immediately via H\"{o}lder's inequality. In what follows, we show proof of (4)
$$
\begin{aligned}
&\quad\|f\mathbf1_{B(\cdot,t)}\|_{L^{\vec{v_{1}}}}\\
&=\left(\int_{\mathbb{R}}
\ldots\left(\int_{\mathbb{R}}\left(\int_{\mathbb{R}}
\left|f\mathbf1_{B(\cdot,t)}\right|^{v_{11}}
d x_1\right)^{\frac{v_{12}}{v_{11}}} d x_2\right)^{\frac{v_{13}}{v_{12}}} \ldots d x_n\right)^{\frac{1}{v_{1n}}}\\
&\leqslant\left(\int_{2^{k-1} \leqslant\left|x_n\right|<2^k} \ldots\left(\int_{2^{k-1} \leqslant\left|x_1\right|<2^k}|f(x)\mathbf1_{B(\cdot,t)(x)}|^{v_{11}} d x_1\right)^{\frac{v_{12}}{v_{11}}} \ldots d x_n\right)^{\frac{1}{v_{1n}}} \\
&\leq2^{k\sum_{i=1}^n\frac{1}{u_{1i}}-\sum_{i=1}^n\frac{1}{u_{2i}}}
\left(\int_{\mathbb{R}}\ldots
\left(\int_{\mathbb{R}}\left(\int_{\mathbb{R}}
\left|f\mathbf1_{B(\cdot,t)}\right|^{v_{21}}
d x_1\right)^{\frac{v_{22}}{v_{21}}} d x_2\right)^{\frac{v_{23}}{v_{22}}} \ldots d x_n\right)^{\frac{1}{v_{2n}}}\\
&\sim\frac{\|\mathbf1_{B(\cdot,t)}\|_{L^{\vec{v_{1}}}(\mathbb{R}^n)}}
{\|\mathbf1_{B(\cdot,t)}\|_{L^{\vec{v_{2}}}(\mathbb{R}^n)}}\|f\mathbf1_{B(\cdot,t)}\|_{L^{\vec{v_{2}}}}.
\end{aligned}
$$
Thus,
$$
\left\|\frac{\|f\mathbf1_{B(\cdot,t)}\|_{L^{\vec{v_{1}}}
(\mathbb{R}^n)}}{\|\mathbf1_{B(\cdot,t)}\|_{L^{\vec{v_{1}}}(\mathbb{R}^n)}}\right\|_{L^{\vec{u}}}
\leq
\left\|\frac{\|f\mathbf1_{B(\cdot,t)}\|_{L^{\vec{v_{2}}}
(\mathbb{R}^n)}}{\|\mathbf1_{B(\cdot,t)}\|_{L^{\vec{v_{2}}}(\mathbb{R}^n)}}\right\|_{L^{\vec{u}}}.
$$
this, together with Definition 2.1, we obtain
$$
\|f\|_{(\dot KE_{\vec{u},\vec{v_{1}}}^{\beta,s})_t(\rn)}
\leq
\|f\|_{(\dot KE_{\vec{u},\vec{v_{2}}}^{\beta,s})_t(\rn)}.
$$
Using a similar method can also get the result (1), (3), and (4) for non-homogeneous mixed-norm Herz-slice space.
\end{proof}
\section{Block decompositions }\label{B}
In this section we establish the decomposition characterizations of mixed-norm Herz-slice spaces.
\begin{definition}
 Let $\beta\in\rr$, $t,s\in(0, \infty)$ and $\vec{v}, \vec{u} \in(1,\fz)^{n}$.\\
(i) A function $\kappa(x)$ on $\rn$ is said to be a central $(\beta,u_{i},v_{i})$-block if
  \begin{enumerate}
    \item[(1)] $\supp(\kappa)\subset B(0,\lambda)$, for some $\lambda>0$;
    \item[(2)] $\|\kappa\|_{(E^{\vec{u}}_{\vec{v}})_t(\rn)}\leq C\lambda^{-\beta}$.
  \end{enumerate}
(ii) A function $\mu(x)$ on $\rn$ is said to be a central $(\beta,u_{i},v_{i})$-block of restrict type if
  \begin{enumerate}
    \item[(1)] $\supp(\mu)\subset B(0,\lambda)$ for some $\lambda\ge 1$;
    \item[(2)] $\|\mu\|_{(E^{\vec{u}}_{\vec{v}})_t(\rn)}\leq C\lambda^{-\beta}$.
  \end{enumerate}
If $\lambda=2^l$ for some $l\in\zz$ , then the corresponding central block is called a dyadic central block.
\end{definition}
\begin{theorem}\label{Thdf}
Let $\beta\in\rr$, $t,s\in(0, \infty)$ and $\vec{v}, \vec{u} \in(1,\fz)^{n}$. The following statements are equivalent:

(1) $\phi\in(\dot KE_{\vec{u},\vec{v}}^{\beta,s})_t(\rn)$;

(2) $\phi$ be able to present as
  \begin{equation}\label{df}
  \phi(x)=\sum\limits_{l\in\zz}\eta_l\mu_l(x),
  \end{equation}
where $\sum\limits_{k\in\zz}|\eta_l|^s<\fz$ and each $\mu_l$ is a dyadic central $(\beta,u_{i},v_{i})$-block with support contained in $B_l$.
\end{theorem}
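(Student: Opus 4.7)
The plan is to prove the two implications separately, with $(1)\Rightarrow(2)$ by an explicit block construction and $(2)\Rightarrow(1)$ by a triangle-inequality estimate combined with an exchange of summation.

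For $(1)\Rightarrow(2)$, given $\phi\in(\dot{KE}_{\vec{u},\vec{v}}^{\beta,s})_t(\rn)$ I would set
\[
\eta_l:=2^{l\beta}\lf\|\phi\mathbf{1}_{S_l}\r\|_{(E^{\vec{u}}_{\vec{v}})_t(\rn)},\qquad \mu_l:=\eta_l^{-1}\phi\mathbf{1}_{S_l}
\]
for each $l\in\zz$ (with the convention $\mu_l:=0$ whenever $\eta_l=0$). Because $\{S_l\}_{l\in\zz}$ partitions $\rn\setminus\{0\}$, one obtains $\phi=\sum_l\eta_l\mu_l$ almost everywhere; by construction $\supp\mu_l\subset S_l\subset B_l$ and $\|\mu_l\|_{(E^{\vec{u}}_{\vec{v}})_t(\rn)}=2^{-l\beta}$, so each $\mu_l$ is a dyadic central $(\beta,u_i,v_i)$-block, and $\sum_l|\eta_l|^s=\|\phi\|_{(\dot{KE}_{\vec{u},\vec{v}}^{\beta,s})_t(\rn)}^s<\infty$ follows directly from the definition of the Herz-slice norm.

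For $(2)\Rightarrow(1)$, suppose $\phi=\sum_l\eta_l\mu_l$ with the stated properties. The key geometric observation is that $\supp\mu_l\subset B_l$ forces $\mu_l\mathbf{1}_{S_k}\equiv 0$ whenever $l<k$, so $\phi\mathbf{1}_{S_k}=\sum_{l\geq k}\eta_l\mu_l\mathbf{1}_{S_k}$. Combining Lemma \ref{llll} with the block bound $\|\mu_l\|_{(E^{\vec{u}}_{\vec{v}})_t(\rn)}\lesssim 2^{-l\beta}$, I would then apply the triangle inequality in the ball Banach function space $(E^{\vec{u}}_{\vec{v}})_t(\rn)$ (Proposition \ref{pl}) for $s\geq 1$, or the elementary inequality $(a+b)^s\leq a^s+b^s$ for $0<s\leq 1$, to control $\|\phi\mathbf{1}_{S_k}\|_{(E^{\vec{u}}_{\vec{v}})_t(\rn)}^{\min(s,1)}$ by a weighted tail sum. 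Multiplying by $2^{k\beta s}$, summing in $k$, and swapping the order of summation reduces the problem to estimating
\[
\sum_{l\in\zz}|\eta_l|^s\, 2^{-l\beta s}\sum_{k\leq l}2^{k\beta s},
\]
in which the inner geometric series collapses to $O(2^{l\beta s})$, producing the desired bound $\|\phi\|_{(\dot{KE}_{\vec{u},\vec{v}}^{\beta,s})_t(\rn)}^s\lesssim\sum_l|\eta_l|^s$.

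The main obstacle will be a unified treatment of the two regimes $0<s\leq 1$ and $s>1$: in the former one must invoke the $s$-subadditivity of $t\mapsto t^s$ in place of the triangle inequality, while in the latter one either uses H\"older to trade factors of $2^{-l\beta}$ or reinterprets the tail sum $2^{k\beta}\|\phi\mathbf{1}_{S_k}\|\lesssim\sum_{j\geq 0}|\eta_{k+j}|2^{-j\beta}$ as a discrete convolution and applies Young's inequality in $\ell^s(\zz)$. A secondary concern is justifying the almost-everywhere convergence of $\sum_l\eta_l\mu_l$ under only $\ell^s$ control of the coefficients; this follows since at each $x\in S_m$ only the tail $l\geq m$ contributes, and that tail is Cauchy in $(E^{\vec{u}}_{\vec{v}})_t(\rn)$ restricted to $B_m$ by the very estimate above. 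The non-homogeneous variant will be handled by the same argument with $l$ restricted to $\{0,1,2,\dots\}$.
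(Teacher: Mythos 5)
Your proposal follows essentially the same route as the paper: the same canonical decomposition $\eta_l=2^{l\beta}\|\phi\mathbf{1}_{S_l}\|_{(E^{\vec{u}}_{\vec{v}})_t(\rn)}$ (the paper uses the comparable normalization $|B_l|^{\beta/n}$), the same support observation reducing $\phi\mathbf{1}_{S_k}$ to the tail $l\ge k$, and the same case split using $s$-subadditivity for $0<s\le 1$ and H\"older's inequality for $s>1$. The only caveat, shared equally by the paper's own proof, is that the inner geometric series $\sum_{k\le l}2^{k\beta s}$ (and the factor $\sum_{l\ge m}2^{-\beta l s'/2}$ in the H\"older step) converges only for $\beta>0$, a restriction not visible in the statement of the theorem.
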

\begin{proof}
For $\phi\in(\dot KE_{\vec{u},\vec{v}}^{\beta,s})_t(\rn)$, write
\begin{align*}
\phi(x)
&=\sum\limits_{l\in\zz}
\frac{|B_l|^{\frac{\beta}{n}}\lf\|\phi\mathbf1_{S_l}\r\|_{(E^{\vec{u}}_{\vec{v}})_t(\rn)}
                   \phi(x)\mathbf1_{S_l}(x)}{|B_l|^{\frac{\beta}{n}}\lf\|\phi\mathbf1_{S_l}\r\|_{(E^{\vec{u}}_{\vec{v}})_t(\rn)}}.
\end{align*}
When
$$
\eta_l=|B_l|^{\frac{\beta}{n}}\lf\|\phi\mathbf1_{S_l}\r\|_{(E^{\vec{u}}_{\vec{v}})_t(\rn)}
~~\text{and}~~\mu_l(x)=\frac{\phi(x)\mathbf1_{S_l}(x)}{|B_l|^{\frac{\beta}{n}}\lf\|\phi\mathbf1_{S_l}\r\|_{(E^{\vec{u}}_{\vec{v}})_t(\rn)}},
$$
it suffices to show that, $\supp(\mu_l)\subset B_l$, $\|\mu_l\|_{(E^{\vec{u}}_{\vec{v}})_t(\rn)}=|B_l|^{-\frac{\beta}{n}}$, and 
$\phi(x)=\sum\limits_{l\in\zz}\eta_l\mu_l(x)$. Therefore, each $\mu_l$ is a dyadic central $(\beta,u_{i},v_{i})$-block with the support $B_l$ and
$$
\sum\limits_{l\in\zz}|\lz_l|^s
=\sum\limits_{l\in\zz}|B_l|^{\frac{\beta s}n}\lf\|\phi\mathbf1_{S_l}\r\|^s_{(E^{\vec{u}}_{\vec{v}})_t(\rn)}
=\|\phi\|^s_{(\dot KE_{\vec{u},\vec{v}}^{\beta,s})_t(\rn)}<\fz.
$$
It remains to be shown that other side. Let $\phi(x)=\sum\limits_{l\in\zz}\eta_l\mu_l(x)$ be a decomposition of $\phi$. For each $m\in\zz$, we have
\begin{equation}\label{edf}
\lf\|\phi\mathbf1_{S_m}\r\|_{(E^{\vec{u}}_{\vec{v}})_t(\rn)}\leq \sum\limits_{l\ge m}|\eta_l|\lf\|\mu_l\r\|_{(E^{\vec{u}}_{\vec{v}})_t(\rn)}.
\end{equation}
Thus, if $0<s\leq 1$
\begin{align*}
\|\phi\|^s_{(\dot KE_{\vec{u},\vec{v}}^{\beta,s})_t(\rn)}
&=\sum\limits_{l\in\zz}2^{l\beta s}\lf\|\phi\mathbf1_{S_l}\r\|^s_{(E_{\vec{u}}^{\vec{v}})_t(\rn)}
\leq \sum\limits_{l\in\zz}2^{l\beta s}\lf(\sum\limits_{m\ge l}|\eta_l|^s\|\mu_m\|^s_{(E_{\vec{u}}^{\vec{v}})_t(\rn)}\r)\\
&\leq \sum\limits_{l\in\zz}2^{l\beta s}\lf(\sum\limits_{m\ge l}|\eta_m|^s2^{\beta ms}\r)
\leq C\sum\limits_{l\in\zz}|\eta_l|^s<\fz.
\end{align*}
If $1<s<\fz$, based on H\"older's inequality and (\ref{edf}),
\begin{align*}
\lf\|\phi\mathbf1_{S_m}\r\|_{(E^{\vec{u}}_{\vec{v}})_t(\rn)}
&\leq\sum\limits_{l\ge m}|\eta_l|\lf\|\mu_l\r\|^\frac12_{(E^{\vec{u}}_{\vec{v}})_t(\rn)}\lf\|\mu_l\r\|^\frac12_{(E^{\vec{u}}_{\vec{v}})_t(\rn)}\\
&\leq \lf(\sum\limits_{l\ge m}|\mu_l|^s\|\mu_l\|^{\frac s2}_{(E^{\vec{u}}_{\vec{v}})_t(\rn)}\r)^\frac1s
\lf(\sum\limits_{l\ge m}\|\mu_l\|^{\frac{s'}2}_{(E^{\vec{u}}_{\vec{v}})_t(\rn)}\r)^{\frac1{s'}}\\
&\leq \lf(\sum\limits_{l\ge m}|\eta_l|^s2^{-\frac {\beta ls}2}\r)^\frac1s
\lf(\sum\limits_{l\ge m}2^{-\frac{\beta ls'}2}\r)^{\frac1{s'}}.
\end{align*}
Therefore,
\begin{align*}
\lf\|\phi\r\|^s_{(\dot KE_{\vec{u},\vec{v}}^{\beta,s})_t(\rn)}
&\leq C\sum_{m\in\zz}2^{\beta ms}
\lf(\sum\limits_{l\ge m}|\eta_l|^s2^{-\frac {\beta l s}2}\r)^\frac1s
\lf(\sum\limits_{l\ge m}2^{-\frac{\beta ls'}2}\r)^{\frac1{s'}}\\
&\leq C\sum_{m\in\zz}|\eta_l|^s\sum\limits_{m\leq l}2^{\beta (m-l)s/2}
\leq C\sum_{l\in\zz}|\eta_l|^s<\fz.
\end{align*}
This completes the conclusion that we want.
\end{proof}
\begin{remark}
By using Theorem \ref{Thdf}, we can show that
$$
\|\phi\|_{(\dot KE_{\vec{u},\vec{v}}^{\beta,s})_t(\rn)}\sim\lf(\sum_{l\in\zz}|\eta_l|^s\r)^\frac1s.
$$
\end{remark}
A similar result is given for $(KE_{\vec{u},\vec{v}}^{\beta,s})_t(\rn)$ as follows, we acknowledge the details.
\begin{theorem}\label{Thndf}
Let $\beta\in\rr$, $t,s\in(0, \infty)$ and $\vec{v}, \vec{u} \in(1,\fz)^{n}$. The following statements are equivalent:

(1)  $\phi\in( KE_{\vec{u},\vec{v}}^{\beta,s})_t(\rn)$;

(2) $\phi$ be able to present as
  \begin{equation}\label{ndf}
  \phi(x)=\sum\limits_{l=0}^\fz\eta_l\mu_l(x),
  \end{equation}
where $\sum\limits_{l=0}^\fz|\eta_l|^s<\fz$ and each $\mu_l$ is a dyadic central $(\beta,u_{i},v_{i})$-block of restrict type
  with support contained in $B_l$.
Moreover,
$$
\|\phi\|_{(KE_{\vec{u},\vec{v}}^{\beta,s})_t(\rn)}\sim\lf(\sum_{l\ge 0}|\eta_l|^s\r)^\frac1s.
$$
\end{theorem}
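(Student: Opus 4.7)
The plan is to adapt the argument for Theorem \ref{Thdf} by replacing sums over $\zz$ with sums over non-negative integers, and by verifying that the resulting blocks are of restrict type. The non-homogeneous case is actually slightly easier than the homogeneous one, since the absence of negative indices eliminates the need to control the decomposition as $l \to -\fz$.

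For the direction $(1) \Rightarrow (2)$, given $\phi \in (KE_{\vec{u},\vec{v}}^{\beta,s})_t(\rn)$, I would write $\phi(x) = \sum_{l=0}^{\fz} \phi(x)\mathbf{1}_{S_l}(x)$ and, for each $l \geq 0$, set
\[
\eta_l := |B_l|^{\beta/n}\,\|\phi\,\mathbf{1}_{S_l}\|_{(E^{\vec{u}}_{\vec{v}})_t(\rn)},
\qquad
\mu_l(x) := \frac{\phi(x)\,\mathbf{1}_{S_l}(x)}{\eta_l}
\]
(with $\mu_l \equiv 0$ when $\eta_l = 0$). Because $\supp \mu_l \subset B_l = B(0,2^l)$ and $2^l \geq 1$ for every $l \geq 0$, the support condition for a restrict type $(\beta,u_i,v_i)$-block is automatic. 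The identity $\|\mu_l\|_{(E^{\vec{u}}_{\vec{v}})_t(\rn)} = |B_l|^{-\beta/n}$ supplies the required normalisation, and the summability follows at once from
\[
\sum_{l=0}^{\fz} |\eta_l|^s \sim \sum_{l=0}^{\fz} 2^{l\beta s} \|\phi\,\mathbf{1}_{S_l}\|^s_{(E^{\vec{u}}_{\vec{v}})_t(\rn)} = \|\phi\|^s_{(KE_{\vec{u},\vec{v}}^{\beta,s})_t(\rn)} < \fz.
\]

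For the reverse direction, suppose $\phi = \sum_{l \geq 0}\eta_l \mu_l$ with $\supp \mu_l \subset B_l$. Only indices $l \geq m$ contribute on $S_m$, so
\[
\|\phi\,\mathbf{1}_{S_m}\|_{(E^{\vec{u}}_{\vec{v}})_t(\rn)} \leq \sum_{l \geq m}|\eta_l|\,\|\mu_l\|_{(E^{\vec{u}}_{\vec{v}})_t(\rn)} \lesssim \sum_{l \geq m}|\eta_l|\, 2^{-l\beta}.
\]
For $0 < s \leq 1$, I would then apply the subadditivity inequality (\ref{cp}), swap the order of summation in the indices $(l,m)$, and estimate the geometric series in $l - m$, obtaining $\sum_{m\geq 0} 2^{m\beta s}\|\phi\,\mathbf{1}_{S_m}\|^s_{(E^{\vec{u}}_{\vec{v}})_t(\rn)} \lesssim \sum_{l\geq 0}|\eta_l|^s$. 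For $1 < s < \fz$, I would split $\|\mu_l\|=\|\mu_l\|^{1/2}\|\mu_l\|^{1/2}$ inside the sum and apply H\"older's inequality with exponents $s,s'$, reproducing the double-sum manipulation in the proof of Theorem \ref{Thdf} verbatim. Combining both directions then yields the norm equivalence $\|\phi\|_{(KE_{\vec{u},\vec{v}}^{\beta,s})_t(\rn)} \sim (\sum_{l \geq 0}|\eta_l|^s)^{1/s}$.

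The main obstacle is essentially nil: the argument is a direct transcription of the homogeneous case. The only point requiring care is that truncating the sum at $l = 0$ automatically forces $2^l \geq 1$, which is precisely what produces restrict type blocks rather than ordinary central blocks, thereby matching the form of the decomposition (\ref{ndf}); one should also verify that the implicit constants arising from the geometric tails depend only on $\beta$ and $s$, which is clear from the explicit exponents.
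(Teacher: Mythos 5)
Your proposal is exactly the argument the paper intends: the paper omits the proof of this theorem, stating only that it follows as in Theorem \ref{Thdf}, and your transcription (restricting the index set to $l\ge 0$, noting that $2^l\ge 1$ makes the blocks automatically of restrict type, and repeating the subadditivity/H\"older split for $0<s\le1$ and $1<s<\infty$) is that adaptation. The only caveat, inherited verbatim from the paper's own proof of Theorem \ref{Thdf}, is that the geometric tails $\sum_{l\ge m}2^{(m-l)\beta s/2}$ converge only for $\beta>0$, so the implicit constants depend on $\beta>0$ rather than on arbitrary $\beta\in\rr$.
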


\section{Boundedness of the Hardy–Littlewood maximal operator}\label{M}
The aim of this section is to give the boundedness of the Hardy–Littlewood maximal operator $\mathscr{M}$ on $(\dot KE_{\vec{u}, \vec{v}}^{\beta,s})_t(\rn)$ and $(KE_{\vec{u}, \vec{v}}^{\beta,s})_t(\rn)$.
First, we show that $\mathscr{M}$ is well defined on $(\dot KE_{\vec{u}, \vec{v}}^{\beta,s})_t(\rn)$. 
\begin{lemma}\label{zzzz}
Let $\beta\in\rr$ and $t,s \in (0, \infty)$. Let $\vec{v}, \vec{u} \in(1,\fz)^{n}$. If $\beta\in\left(-\sum_{i=1}^n{1}/{u_i}, \infty\right)$ with $i \in\{1, \ldots, n\}$. Then, for any $\phi\in (\dot KE_{\vec{u}, \vec{v}}^{\beta,s})_t(\rn)$, we have
$$
(\dot KE_{\vec{u}, \vec{v}}^{\beta,s})_t(\rn) \subseteq L_{\mathrm{loc}}^1\left(\mathbb{R}^n\right).
$$
\end{lemma}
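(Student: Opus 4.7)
The plan is to reduce the local integrability claim to a summable annular estimate based on the dyadic decomposition underlying the Herz-slice norm. Since every compact subset of $\rn$ lies inside some ball $B_N=B(0,2^N)$ with $N\in\zz$, it suffices to show that $\int_{B_N}|\phi(x)|\,dx<\infty$ for each such $N$. Writing $B_N$ (up to a null set) as $\bigcup_{m\leq N}S_m$, the task becomes controlling the sum $\sum_{m\leq N}\int_{S_m}|\phi(x)|\,dx$ annulus by annulus.

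For each annulus I would apply the H\"older-type inequality of Lemma \ref{l} on the mixed-norm slice space with conjugate tuples $\vec{u},\vec{u'}$ and $\vec{v},\vec{v'}$, obtaining
\[
\int_{S_m}|\phi(x)|\,dx\leq \lf\|\phi\mathbf1_{S_m}\r\|_{(E^{\vec{u}}_{\vec{v}})_t(\rn)}\lf\|\mathbf1_{S_m}\r\|_{(E^{\vec{u'}}_{\vec{v'}})_t(\rn)}.
\]
Remark \ref{r} applied with the dual tuples then gives $\|\mathbf1_{S_m}\|_{(E^{\vec{u'}}_{\vec{v'}})_t(\rn)}\lesssim 2^{m\sum_{i=1}^n 1/u_i'}$, reducing the question to the finiteness of the series $\sum_{m\leq N}2^{m\sum_{i=1}^n 1/u_i'}\|\phi\mathbf1_{S_m}\|_{(E^{\vec{u}}_{\vec{v}})_t(\rn)}$.

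To control this series I would split cases on $s$. For $s\in(1,\infty)$, a discrete H\"older inequality with exponents $s$ and $s'$, applied after inserting the balancing factor $2^{m\beta}\cdot 2^{-m\beta}$, yields
\[
\sum_{m\leq N}2^{m\sum_{i=1}^n 1/u_i'}\lf\|\phi\mathbf1_{S_m}\r\|_{(E^{\vec{u}}_{\vec{v}})_t(\rn)}\leq \lf(\sum_{m\leq N}2^{m(\sum_{i=1}^n 1/u_i'-\beta)s'}\r)^{1/s'}\|\phi\|_{(\dot KE_{\vec{u},\vec{v}}^{\beta,s})_t(\rn)}.
\]
For $s\in(0,1]$, I would instead exploit the elementary termwise bound $\|\phi\mathbf1_{S_m}\|_{(E^{\vec{u}}_{\vec{v}})_t(\rn)}\leq 2^{-m\beta}\|\phi\|_{(\dot KE_{\vec{u},\vec{v}}^{\beta,s})_t(\rn)}$, which is immediate from the definition of the Herz-slice norm, producing a single geometric series $\sum_{m\leq N}2^{m(\sum_{i=1}^n 1/u_i'-\beta)}$.

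The main obstacle is verifying the convergence of the resulting geometric series as $m\to-\infty$, since this controls the mass of $\phi$ near the origin and is precisely where the hypothesis on $\beta$ relative to the homogeneity exponents $1/u_i$ must be invoked. Once this summability is established under the assumption on $\beta$, one arrives at an estimate of the form $\int_{B_N}|\phi(x)|\,dx\lesssim\|\phi\|_{(\dot KE_{\vec{u},\vec{v}}^{\beta,s})_t(\rn)}$ with constant depending on $N$, which delivers the claimed inclusion $(\dot KE_{\vec{u},\vec{v}}^{\beta,s})_t(\rn)\subseteq L_{\mathrm{loc}}^1(\rn)$.
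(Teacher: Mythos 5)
Your route is essentially the paper's argument unpacked: the paper applies the H\"older inequality at the level of the Herz--slice spaces (Corollary \ref{ll}) to get $\|\phi\mathbf 1_{B}\|_{L^1(\rn)}\leq\|\phi\|_{(\dot KE_{\vec{u},\vec{v}}^{\beta,s})_t(\rn)}\|\mathbf 1_{B}\|_{(\dot KE_{\vec{u'},\vec{v'}}^{-\beta,s'})_t(\rn)}$ and then argues that the second factor is finite, whereas you apply Lemma \ref{l} annulus by annulus and re-sum with discrete H\"older; the two computations coincide once Corollary \ref{ll} is unwound. The skeleton (decomposition of $B_N$ into the $S_m$, the termwise bound $\|\phi\mathbf 1_{S_m}\|_{(E^{\vec u}_{\vec v})_t(\rn)}\le 2^{-m\beta}\|\phi\|_{(\dot KE_{\vec{u},\vec{v}}^{\beta,s})_t(\rn)}$, the case split in $s$) is sound.

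The gap sits exactly at the step you defer. The series you are left with,
$$
\sum_{m\le N}2^{m\left(\sum_{i=1}^n 1/u_i'-\beta\right)}\qquad\text{(or its $s'$-power analogue),}
$$
converges as $m\to-\infty$ if and only if $\beta<\sum_{i=1}^n 1/u_i'=n-\sum_{i=1}^n 1/u_i$, and this is \emph{not} implied by the stated hypothesis $\beta>-\sum_{i=1}^n 1/u_i$ (take $\beta$ large and positive; one can then build $\phi$ in the space whose integral near the origin diverges). The lower bound on $\beta$ is the condition of Proposition \ref{pll} for $\mathbf 1_{B}$ to belong to $(\dot KE_{\vec u,\vec v}^{\beta,s})_t(\rn)$, i.e.\ it controls decay at infinity; integrability of a general $\phi$ near the origin is governed by the upper bound $\beta<\sum_{i=1}^n 1/u_i'$, which is precisely the condition for $\mathbf 1_B$ to lie in the dual space $(\dot KE_{\vec{u'},\vec{v'}}^{-\beta,s'})_t(\rn)$. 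So the argument cannot be closed from the hypothesis as written; you need the upper bound on $\beta$ that appears in the maximal-function theorem where this lemma is used. (The paper's own proof has the same defect: its second displayed inequality silently replaces $\|\mathbf 1_B\|_{(\dot KE_{\vec{u'},\vec{v'}}^{-\beta,s'})_t(\rn)}$ by $\|\mathbf 1_B\|_{(\dot KE_{\vec u,\vec v}^{\beta,s})_t(\rn)}$, and it is only for the latter that Proposition \ref{pll} with $\beta>-\sum_{i=1}^n 1/u_i$ applies.) A secondary point you should also flag: Remark \ref{r} is derived from Lemma \ref{lll}, which is proved only for radii $\lambda>1$, while your sum needs the estimate for $S_m$ with $m\to-\infty$, where the slice norm of a small ball scales with the $v_i$ rather than the $u_i$ exponents.
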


\begin{proof} 
For any $\phi \in (\dot KE_{\vec{u}, \vec{v}}^{\beta,s})_t(\rn)$ and ball $B\left(x_0, \lambda\right)$ with $x_0 \in \mathbb{R}^n$ and $\lambda\in(0, \infty)$, by Lemma \ref{lll} and Corollary \ref{NN}, we have
\begin{align*}
\left\|\phi \mathbf1_{B\left(x_0, \lambda\right)}\right\|_{L^{1}}
&\leq\left\|\phi \mathbf1_{B\left(x_0, \lambda\right)}\right\|_{(\dot KE_{\vec{u}, \vec{v}}^{\beta,s})_t(\rn)}\left\|\mathbf1_{B\left(x_0, \lambda\right)}\right\|
_{(\dot KE_{\vec{u'}, \vec{v'}}^{\beta^{'},s^{'}})_t(\rn)}\\
&\leq\left\|\phi\right\|_{(\dot KE_{\vec{u}, \vec{v}}^{\beta,s})_t(\rn)}\left\|\mathbf1_{B\left(x_0, \lambda\right)}\right\|
_{(\dot KE_{\vec{u}, \vec{v}}^{\beta,s})_t(\rn)}.
\end{align*}
Due to $\mathbf1_{B\left(x_0, \lambda\right)}\in (\dot KE_{\vec{u}, \vec{v}}^{\beta,s})_t(\rn)$ by Proposition \ref{pll}. This proves the required conclusion.
\end{proof}

\begin{theorem}
Let $\beta\in\rr$, $t,s\in (0, \infty)$, $\vec{v}, \vec{u} \in(1,\fz)^{n}$ and $-\sum_{i=1}^n1/u_{i}<\beta<n-1/\sum_{i=1}^n1/u_{i}$. Suppose that the $\mathscr{M}$
satisfies

(1) for suitable function $\phi$ with $\supp(\phi)\subset S_k$ and $|x|\ge 2^{k+1}$ with $k\in\zz$,
\begin{equation}\label{T1}
|\mathscr{M}\phi(x)|\leq C\|\phi\|_{L^1(\rn)}|x|^{-n};
\end{equation}

(2) for suitable function $\phi$ with $\supp(\phi)\subset S_k$ and $|x|\le 2^{k-2}$ with $k\in\zz$,
\begin{equation}\label{T2}
|\mathscr{M}\phi(x)|\leq C2^{-kn}\|\phi\|_{L^1(\rn)}.
\end{equation}For any $\phi\in L_{\mathrm{loc}}^1\left(\mathbb{R}^n\right)$, then $\mathscr{M}\phi$ is bounded on $(\dot KE_{\vec{u}, \vec{v}}^{\beta,s})_t(\rn)$.
\end{theorem}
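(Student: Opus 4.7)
The plan is to exploit the standard annular decomposition behind Herz-type spaces together with the pointwise hypotheses \eqref{T1}, \eqref{T2}. Writing $\phi=\sum_{k\in\zz}\phi_k$ with $\phi_k:=\phi\mathbf1_{S_k}$, the sublinearity of $\mathscr{M}$ gives $\mathscr{M}\phi\leq\sum_{k\in\zz}\mathscr{M}\phi_k$, so by the definition of $(\dot KE_{\vec{u},\vec{v}}^{\beta,s})_t(\rn)$ the matter reduces to controlling $\|\mathscr{M}\phi\,\mathbf1_{S_m}\|_{(E^{\vec u}_{\vec v})_t(\rn)}$ for each $m\in\zz$. I would split the sum in $k$ into three regimes based on the geometry of $S_k$ relative to $S_m$: the \emph{nearby} range $k\in\{m-1,m,m+1\}$, the \emph{inner} range $k\leq m-2$, and the \emph{outer} range $k\geq m+2$. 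For the nearby range I would simply quote the known boundedness of $\mathscr{M}$ on the mixed-norm slice space $(E^{\vec u}_{\vec v})_t(\rn)$ (available in the cited slice-space literature and whose ball Banach function structure has just been established in Proposition \ref{pl}), obtaining $\|\mathscr{M}\phi_k\mathbf1_{S_m}\|_{(E^{\vec u}_{\vec v})_t(\rn)}\lesssim\|\phi_k\|_{(E^{\vec u}_{\vec v})_t(\rn)}$.

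For the inner range, any $x\in S_m$ satisfies $|x|\geq2^{m-1}\geq2^{k+1}$, so \eqref{T1} yields $|\mathscr{M}\phi_k(x)|\lesssim 2^{-mn}\|\phi_k\|_{L^1(\rn)}$. Combining H\"older's inequality (Lemma \ref{l}), the estimate $\|\mathbf1_{S_k}\|_{(E^{\vec{u'}}_{\vec{v'}})_t(\rn)}\lesssim 2^{k\sum_i 1/u_i'}$ from Remark \ref{r}, and the same remark applied to $S_m$, one collects
\begin{equation*}
\|\mathscr{M}\phi_k\mathbf1_{S_m}\|_{(E^{\vec u}_{\vec v})_t(\rn)}\lesssim 2^{(k-m)\sum_{i=1}^n 1/u_i'}\,\|\phi_k\|_{(E^{\vec u}_{\vec v})_t(\rn)}.
\end{equation*}
For the outer range ($k\geq m+2$) the symmetric argument using \eqref{T2} instead of \eqref{T1} produces the dual decay $2^{(m-k)\sum_i 1/u_i}\|\phi_k\|_{(E^{\vec u}_{\vec v})_t(\rn)}$.

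The remaining task is to plug these three pointwise-in-$m$ estimates into $\sum_m 2^{m\beta s}(\cdot)^s$ and recover $\|\phi\|^s_{(\dot KE_{\vec u,\vec v}^{\beta,s})_t(\rn)}$. For $s>1$ I would apply a generalized H\"older / Jensen trick, multiplying and dividing by a small geometric weight $2^{(k-m)\varepsilon}$ to convert $\big(\sum_k a_k\big)^s\leq C\sum_k 2^{(k-m)(s\delta-(s-1)\varepsilon)}a_k^s$; for $0<s\leq 1$ I would use the elementary inequality \eqref{cp} instead. After swapping the summation order one is left with geometric series in $m$ whose exponents are respectively $\beta s-s\sum_i 1/u_i'+O(\varepsilon)$ (inner) and $\beta s+s\sum_i 1/u_i+O(\varepsilon)$ (outer), which converge precisely under the hypothesis $-\sum_{i=1}^n 1/u_i<\beta<\sum_{i=1}^n 1/u_i'$; after summation both telescope to $\sum_k 2^{k\beta s}\|\phi_k\|^s_{(E^{\vec u}_{\vec v})_t(\rn)}$, matching the target norm.

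The main obstacle will be to execute this bookkeeping uniformly in the two ranges $0<s\leq 1$ and $1<s<\infty$ while keeping the $\beta$-admissibility transparent; choosing the auxiliary exponent $\varepsilon>0$ small relative to the slack $\sum 1/u_i'-\beta$ and $\beta+\sum 1/u_i$ is the only delicate point. The argument then transfers verbatim to $(KE_{\vec u,\vec v}^{\beta,s})_t(\rn)$ by restricting all sums to $k,m\geq 0$, with the nearby and outer ranges unchanged and the inner range automatically finite.
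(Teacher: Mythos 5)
Your proposal follows essentially the same route as the paper: decompose $\phi=\sum_k\phi\mathbf1_{S_k}$, split into the nearby/inner/outer ranges, handle the nearby range by the known boundedness of $\mathscr{M}$ on $(E^{\vec u}_{\vec v})_t(\rn)$, and treat the other two ranges via \eqref{T1}, \eqref{T2}, H\"older's inequality (Lemma \ref{l}) and the characteristic-function estimate of Remark \ref{r}, with the same case split between $0<s\leq1$ and $1<s<\infty$. Your derived admissibility condition $-\sum_{i=1}^n1/u_i<\beta<\sum_{i=1}^n1/u_i'=n-\sum_{i=1}^n1/u_i$ is exactly what the paper's computation requires (the exponent $n-1/\sum_{i=1}^n1/u_i$ in the theorem statement appears to be a typographical slip), so the argument is correct.
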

\begin{proof}
Let
$$\phi(x)=\sum\limits_{m\in\zz} \phi(x)\mathbf1_{S_m}(x):= \sum\limits_{m\in\zz} \phi_m(x).$$
By Lemma \ref{zzzz}, we know $\mathscr{M}\phi$ is well defined on $(\dot KE_{\vec{u},\vec{v}}^{\beta,s})_t(\rn)$. Observe that
$$
\begin{aligned}
\lf\|\mathscr{M}\phi\r\|_{(\dot KE_{\vec{u},\vec{v}}^{\beta,s})_t(\rn)}
&=\lf(\sum\limits_{k\in\zz}2^{k\beta s}\lf\|\mathscr{M}\phi\mathbf1_{S_k}\r\|^s_{(E_{\vec{v}}^{\vec{u}})_t(\rn)}\r)^\frac1s\\
&=\lf(\sum\limits_{k\in\zz}2^{k\beta s}\lf\|\sum\limits_{m=-\fz}^\fz \mathscr{M}\phi_m\mathbf1_{S_k}\r\|^s_{(E_{\vec{v}}^{\vec{u}})_t(\rn)}\r)^\frac1s
\end{aligned}
$$
$$
\begin{aligned}
&\leq \lf(\sum\limits_{k\in\zz}2^{k\beta s}\lf\|\sum\limits_{m=-\fz}^{k-2} \mathscr{M}\phi_m\mathbf1_{S_k}\r\|^s_{(E_{\vec{v}}^{\vec{u}})_t(\rn)}\r)^\frac1s\\
&+\lf(\sum\limits_{k\in\zz}2^{k\beta s}\lf\|\sum\limits_{m=k-1}^{k+1} \mathscr{M}\phi_m\mathbf1_{S_k}\r\|^s_{(E_{\vec{v}}^{\vec{u}})_t(\rn)}\r)^\frac1s\\
&+\lf(\sum\limits_{k\in\zz}2^{k\beta s}\lf\|\sum\limits_{m=k+2}^\fz \mathscr{M}\phi_m\mathbf1_{S_k}\r\|^s_{(E_{\vec{v}}^{\vec{u}})_t(\rn)}\r)^\frac1s
:=\mathrm{I}+\mathrm{II}+\mathrm{III}.
\end{aligned}
$$
For $\mathrm{I}$, from Lemma \ref{l} and (\ref{T2}), we find that
$$
\begin{aligned}
&\lf\|\sum\limits_{m=-\fz}^{k-2} \mathscr{M}\phi_m\mathbf1_{S_k}\r\|^s_{(E_{\vec{v}}^{\vec{u}})_t(\rn)}\\
&\lesssim\lf\|\sum\limits_{m=-\fz}^{k-2} \lf\|\phi_m\r\|_{L^1(\rn)}2^{-kn}\mathbf1_{S_k}\r\|^s_{(E_{\vec{v}}^{\vec{u}})_t(\rn)}\\
&\lesssim\lf\|\sum\limits_{m=-\fz}^{k-2} \lf\|\phi_m\r\|_{(E_{\vec{v}}^{\vec{u}})_t(\rn)}\lf\|\mathbf1_{S_m}\r\|_{(E_{{\vec{v}}'}^{{\vec{u}}'})_t(\rn)}
2^{-kn}\mathbf1_{S_k}\r\|^s_{(E_{\vec{v}}^{\vec{u}})_t(\rn)}.
\end{aligned}
$$
For $s\in(0,1]$, by Lemma \ref{lll}, we have
$$
\begin{aligned}
\mathrm{I}&
\lesssim\lf(\sum\limits_{k\in\zz}2^{k\beta s}\sum\limits_{m=-\fz}^{k-2} \|\phi_m\|^s_{(E_{\vec{v}}^{\vec{u}})_t(\rn)}
2^{(m-k)s(n-\sum_{i=1}^n\frac{1}{u_i})}\r)^\frac1s\\
&\lesssim \lf(\sum\limits_{m\in\zz}2^{m\beta s}\|\phi_m\|^s_{(E_{\vec{v}}^{\vec{u}})_t(\rn)}\r)^\frac1s
\lesssim \|\phi\|_{(\dot KE_{\vec{u},\vec{v}}^{\beta,s})_t(\rn)},
\end{aligned}
$$
for $s\in(1,\fz)$, by Lemma \ref{l} and (\ref{T2}), we have
$$
\begin{aligned}
\lf\|\sum\limits_{m=-\fz}^{k-2} \mathscr{M}\phi_m\mathbf1_{S_k}\r\|^s_{(E_{\vec{v}}^{\vec{u}})_t(\rn)}
\lesssim\lf\|\sum\limits_{m=-\fz}^{k-2} \|\phi_m\|_{L^1(\rn)}2^{-kn}\mathbf1_{S_k}\r\|^s_{(E_{\vec{v}}^{\vec{u}})_t(\rn)}
\end{aligned}
$$
$$
\begin{aligned}
&\lesssim\lf\|\sum\limits_{m=-\fz}^{k-2} \|\phi_m\|_{(E_{\vec{v}}^{\vec{u}})_t(\rn)}
\lf\|\mathbf1_{S_k}\r\|_{(E_{{\vec{v'}}}^{{\vec{u'}}})_t(\rn)}2^{-kn}\mathbf1_{S_k}\r\|^s_{(E_{\vec{v}}^{\vec{u}})_t(\rn)}\\
&\lesssim \lf(\sum\limits_{m=-\fz}^{k-2} \|\phi_m\|_{(E_{\vec{v}}^{\vec{u}})_t(\rn)}\r)^s\lf\|\mathbf1_{S_m}\r\|^s_{(E_{{\vec{v'}}}^{{\vec{u'}}})_t(\rn)}2^{-kns}
\lf\|\mathbf1_{S_k}\r\|^s_{(E_{\vec{v}}^{\vec{u}})_t(\rn)}.\\
\end{aligned}
$$
By Lemma \ref{lll}, we deduce
$$
\begin{aligned}
\mathrm{I}
&\lesssim \lf(\sum\limits_{k\in\zz}2^{k\beta s}\lf(\sum\limits_{m=-\fz}^{k-2} \|\phi_m\|_{(E_{\vec{v}}^{\vec{u}})_t(\rn)}\r)^s2^{(k-m)s(\sum_{i=1}^n\frac{1}{u_i}-n)}\r)^\frac1s\\
&\lesssim \lf(\sum\limits_{k\in\zz}2^{k\beta s}\lf(\sum\limits_{m=-\fz}^{k-2} \|\phi_m\|^s_{(E_{\vec{v}}^{\vec{u}})_t(\rn)}2^{(k-m)(\sum_{i=1}^n\frac{1}{u_i}-n)s/2}\r)\r)^\frac1s\\
&\hs\times\lf(\lf(\sum\limits_{m=-\fz}^{k-2}2^{(k-m)(\sum_{i=1}^n\frac{1}{u_i}-n)s'/2}\r)^{s/s'}\r)^\frac1s
\lesssim \|\phi\|_{(\dot KE_{\vec{u},\vec{v}}^{\beta,s})_t(\rn)}.
\end{aligned}
$$
We remark that $\mathscr{M}$ is bounded on $(E^{\vec{u}}_{\vec{v}})_t(\mathbb{R}^n)$ \cite[Lemma 2.5]{LZWO}, for $\mathrm{II}$, we see that
$$
\begin{aligned}
\mathrm{II}
&\lesssim \lf(\sum\limits_{k\in\zz}2^{k\beta s}\lf\|\sum\limits_{m=k-1}^{k+1} \mathscr{M}\phi_m\r\|^s_{(E_{\vec{v}}^{\vec{u}})_t(\rn)}\r)^\frac1s\\
&\lesssim \lf(\sum\limits_{k\in\zz}\sum\limits_{m=k-1}^{k+1}2^{(k-m)\beta s}2^{m\beta s}\|\phi_m\|^s_{(E_{\vec{v}}^{\vec{u}})_t(\rn)}\r)^\frac1s\\
&\lesssim \lf(\sum\limits_{m\in\zz}2^{m\beta s}\|\phi_m\|^s_{(E_{\vec{v}}^{\vec{u}})_t(\rn)}\r)^\frac1s
\lesssim \|\phi\|_{(\dot KE_{\vec{u},\vec{v}}^{\beta,s})_t(\rn)}.
\end{aligned}
$$
For the part of $\mathrm{III}$, using Lemma \ref{l} and (5.1), we see that
$$
\begin{aligned}
&\lf\|\sum\limits_{m=k+2}^\fz \mathscr{M}\phi_m\mathbf1_{S_k}\r\|^s_{(E_{\vec{v}}^{\vec{u}})_t(\rn)}
\lesssim \lf\|\sum\limits_{m=k+2}^\fz
\|\phi_m\|_{L^1(\rn)}2^{-mn}\mathbf1_{S_k}\r\|^s_{(E_{\vec{v}}^{\vec{u}})_t(\rn)}
\end{aligned}
$$
$$
\begin{aligned}
&\lesssim \lf\|\sum\limits_{m=k+2}^\fz 2^{-mn}
\mathbf1_{S_k}\|\phi_m\|_{(E_{\vec{v}}^{\vec{u}})_t(\rn)}
\lf\|\mathbf1_{S_m}\r\|_{(E_{{\vec{v'}}}^{{\vec{u'}}})_t(\rn)}\r\|^s_{(E_{\vec{v}}^{\vec{u}})_t(\rn)}\\
&\lesssim \sum\limits_{m=k+2}^\fz 2^{-mn}
\|\phi_m\|_{(E_{\vec{v}}^{\vec{u}})_t(\rn)}\|\mathbf1_{S_k}\|_{(E_{\vec{v}}^{\vec{u}})_t(\rn)}
\lf\|\mathbf1_{S_m}\r\|_{(E_{{\vec{v'}}}^{{\vec{u'}}})_t(\rn)}.
\end{aligned}
$$
When $0<s\leq 1$, by Lemma \ref{lll}, we can write
$$
\begin{aligned}
\mathrm{III}
&\lesssim \lf[\sum\limits_{k\in\zz}2^{k\beta s}\sum\limits_{m=k+2}^\fz \lf(2^{-mn}\|\phi_m\|_{(E_{\vec{v}}^{\vec{u}})_t(\rn)}
\lf\|\mathbf1_{B_m}\r\|_{(E_{{\vec{v'}}}^{{\vec{u'}}})_t(\rn)}
\lf\|\mathbf1_{B_k}\r\|_{(E_{\vec{v}}^{\vec{u}})_t(\rn )}\r)^{s}\r]^\frac1s\\
&\lesssim \lf(\sum\limits_{k\in\zz}2^{k\beta s}\sum\limits_{m=k+2}^\fz\|\phi_m\|^s_{(E_{\vec{v}}^{\vec{u}})_t(\rn)}
2^{ms(n-\sum_{i=1}^n\frac{1}{u_i})}2^{-msn}2^{ks\sum_{i=1}^n\frac{1}{u_i}}\r)^\frac1s\\
&\lesssim \lf(\sum\limits_{k\in\zz}2^{k\beta s}\sum\limits_{m=k+2}^\fz \lf(2^{m\beta-k\beta}\|\phi_m\|_{(E_{\vec{v}}^{\vec{u}})_t(\rn)}\r)^{s}
\r)^\frac1s
\lesssim \|\phi\|_{(\dot KE_{\vec{u},\vec{v}}^{\beta,s})_t(\rn)}.
\end{aligned}
$$
Using Lemma \ref{l} and \ref{T1}, we know that
$$
\begin{aligned}
&\lf\|\sum\limits_{m=k+2}^\fz \mathscr{M}\phi_m\mathbf1_{S_k}\r\|^s_{(E_{\vec{v}}^{\vec{u}})_t(\rn)}\\
&\lesssim \lf\|\sum\limits_{m=k+2}^\fz \|\phi_m\|_{L^1(\rn)}2^{-mn}\mathbf1_{S_k}\r\|^s_{(E_{\vec{v}}^{\vec{u}})_t(\rn)}\\
&\lesssim \lf\|\sum\limits_{m=k+2}^\fz \|\phi_m\|_{(E_{\vec{v}}^{\vec{u}})_t(\rn)}\lf\|\mathbf1_{S_m}\r\|_{(E_{{\vec{u'}}}^{{\vec{u'}}})_t(\rn)}2^{-mn}
\mathbf1_{S_k}\r\|^s_{(E_{\vec{v}}^{\vec{u}})_t(\rn)}.
\end{aligned}
$$
For $1<s<\fz$, by Lemma \ref{lll} yields
$$
\begin{aligned}
\mathrm{III}
&\lesssim \lf[\sum\limits_{k\in\zz}2^{k\beta s}\lf(2^{-mn}\sum\limits_{m=k+2}^\fz\|\phi_m\|_{(E_{\vec{v}}^{\vec{u}})_t(\rn)}
\lf\|\mathbf1_{B_m}\r\|_{(E_{{\vec{v'}}}^{{\vec{u'}}})_t(\rn)}
\lf\|\mathbf1_{B_k}\r\|_{(E_{\vec{v}}^{\vec{u}})_t(\rn)}\r)^{s}\r]^\frac1s\\
&\lesssim \lf(\sum\limits_{k\in\zz} 2^{m\beta s} \|\phi_m\|^s_{(E_{\vec{v}}^{\vec{u}})_t(\rn)}\r)^\frac1s
\lesssim \|\phi\|_{(\dot KE_{\vec{u},\vec{v}}^{\beta,s})_t(\rn)}.
\end{aligned}
$$
We got what we want.
\end{proof}

%
%
%
%
%



\hspace*{-0.6cm}\textbf{\bf Competing interests}\\
The authors declare that they have no competing interests.\\

\hspace*{-0.6cm}\textbf{\bf Funding}\\
The research was supported by the National Natural Science Foundation of China (Grant No. 12061069).\\

\hspace*{-0.6cm}\textbf{\bf Authors contributions}\\
All authors contributed equality and significantly in writing this paper. All authors read and approved the final manuscript.\\

\hspace*{-0.6cm}\textbf{\bf Acknowledgments}\\
All authors would like to express their thanks to the referees for valuable advice regarding previous version of this paper.\\

\end{document}